\documentclass[10pt]{amsart}
\usepackage{amsmath}
\usepackage{amsthm}
\usepackage{amsfonts}
\usepackage{amssymb}
\usepackage{amscd}

\theoremstyle{plain}
\newtheorem{maintheorem}{Theorem}

\newtheorem{theorem}{Theorem}[section]
\newtheorem{proposition}[theorem]{Proposition}
\newtheorem{lemma}[theorem]{Lemma}

\theoremstyle{definition}
\newtheorem{remark}[theorem]{Remark}

\newcommand{\cL}{{\mathcal L}}

\newcommand{\cP}{{\mathcal P}}

\newcommand{\cZ}{{\mathcal Z}}
\newcommand{\fP}{{\mathfrak P}}

\newcommand{\vep}{\varepsilon}

\newcommand{\RR}{\mathbb{R}}
\newcommand{\ZZ}{\mathbb{Z}}
\newcommand{\PP}{\mathbb{P}}

\newcommand{\NN}{\mathbb{N}}

\newcommand{\GL}{\operatorname{GL}}

\newcommand{\quand}{\quad\text{and}\quad}

\newcommand{\pF}{\PP(F)}
\newcommand{\phF}{\PP(\hat F)}


\newcommand{\pu}{u}
\newcommand{\pv}{v}
\newcommand{\pw}{w}
\newcommand{\pz}{z}
\newcommand{\vu}{\underline{u}}
\newcommand{\vv}{\underline{v}}
\newcommand{\vw}{\underline{w}}
\newcommand{\proj}{\operatorname{proj}}

\begin{document}

\title{Lyapunov exponents of linear cocycles over Markov shifts}
\author{Ela{\'\i}s C. Malheiro and Marcelo Viana}
\address{IMPA, Est. D. Castorina 110 \\ 22460-320 Rio de Janeiro, RJ, Brazil}
\email{elaisc@impa.br, viana@impa.br}
\thanks{The authors are supported by CNPq, FAPERJ and PRONEX-Dynamical Systems.}
\date{\today}

\begin{abstract}
The Lyapunov exponents of $\GL(2)$-cocycles over Markov shifts depend continuously on
the underlying data, that is, on the matrix coefficients and the Markov measure
transition probabilities.
\end{abstract}

\maketitle


\section{Introduction}\label{s.introduction}

The notion of Lyapunov exponent goes back to the late 19th century work of A. M. Lyapunov $[12]$
on the stability of solutions of differential equations: the so-called first method of Lyapunov amounts to
saying that, under suitable conditions, one has exponential stability whenever the exponents are all negative.

It turns out that these numbers encode very important information on the behavior of the dynamical system.
A striking illustration of this fact is the theory of (non-uniformly) hyperbolic systems,
initiated by Oseledets $[14]$ and Pesin $[15, 16]$. In a few words, it asserts that smooth
diffeomorphisms and flows whose Lyapunov exponents do not vanish exhibit a very rich geometric structure,
including invariant stable and unstable laminations that are absolutely continuous
(see for instance $[5, \text{Appendic C}]$), from which a refined description of the dynamics can be drawn.

In this work we are particularly concerned with the dependence of Lyapunov exponents on the underlying
system. In addition to its intrinsic interest, this problem is also connected to the issue of when do
Lyapunov exponents vanish. For example Bourgain, Jitomirskaya $[7, 6]$ proved that the Lyapunov
exponent of certain Schr\"odinger cocycles with analytic potential vary continuously with the energy
parameter and used that fact to obtain an explicit lower bound for the Lyapunov exponent. From this they
deduced that such cocycles have Anderson localization.

While there is a number of other situations where Lyapunov exponents are known to vary continuously, and
even analytically, under perturbations of the system (see $[17, \text{Section 10.6}]$ and the references therein),
it is also known that in general their behavior is rather wild. Again, an especially striking illustration
is the discovery, due to Ma\~n\'e $[13]$, that in the realm of $C^1$ area-preserving surface diffeomorphisms
the only continuity points for the Lyapunov exponents are the Anosov maps (which exist only on the torus)
and those maps whose exponents vanish almost every point. A complete proof was given by Bochi $[1]$
and this has also been extended to arbitrary dimension, by Bochi, Viana $[3, 2]$.

The derivative of a $C^1$ diffeomorphism is a $C^0$ linear cocycle, of course. For cocycles that are
better than just continuous, the picture seems to be richer and is far from being understood.
In this direction, the second author conjectured around 2004 that the Lyapunov exponents vary continuously
restricted to the subset of fiber-bunched H\"older-continuous $\GL(2)$-cocycles.
For definitions and more precise statements, see $[5, \text{Section 12.4}]$ and $[17, \text{Section 10.6}]$.
The fiber-bunching condition can not be omitted, as observed in $[17, \text{Section 9.3}]$. But continuity
should hold within any family of cocycles admitting invariant holonomies.

The first result was obtained in 2009 by Bocker, Viana $[4]$, who proved continuity for locally
constant cocycles over a Bernoulli shift. It is this result that we now extend to the Markov case:
\emph{The Lyapunov exponents of $\GL(2)$-cocycles over Markov shifts depend continuously on the matrix
coefficients and the transition probabilities.} The complete statement is given in
Theorem~\ref{t.main} below.

Our method is very different from the one in $[4]$ and may be viewed as an extension of the
method used in $[17, \text{Chapter 10}]$ to give an alternative proof of the theorem of Bocker-Viana.
In either case, the starting point is the notion of stationary measure and the fact that the stationary
measures completely determine the Lyapunov exponents (see Furstenberg $[8]$ and Ledrappier $[11]$).
Then, both arguments are based on estimating the stationary measures of nearby cocycles, to prove that
they can not accumulate too much on the neighborhood of a single point. This, they achieve in very
different ways.

Now, the notion of stationary measure does not really make sense outside of the Bernoulli case.
In the present Markov setting, we are able to substitute it with the notion of \emph{stationary measure vector,}
which we introduce here. While this is a fairly straightforward extension of the notion of stationary measure,
stationary measure vectors are a lot more difficult to handle because, in a few words, we need the different
vector components to be quite homogeneous. Indeed, a good part of our efforts in Sections~\ref{s.proof2}
through~\ref{s.proof3} is devoted to establishing such homogeneity.

This notion of stationary measure vector has a natural counterpart in the general setting of the conjecture
mentioned previously, and the methods we develop here should be useful for proving that conjecture.
But the analysis of stationary measure vectors in such generality has yet to be carried out in full.

This work is organized as follows. In Section~\ref{s.statementofresults} we give the precise context and
statement of our main result. In Section~\ref{s.measurevectors} we define stationary measure vector and
prove a few useful properties. In Sections~\ref{s.invariantsubspaces} and~\ref{s.probabilisticrepellers}
we adapt to our setting an argument of Furstenberg, Kifer $[10]$ showing that any possible discontinuity
point must exhibit an invariant subspace that is a kind of repeller for the action of the cocycle in projective
space. Our task is then to show that continuity holds even in the presence of such a repeller.
In Section~\ref{s.proofmain} we reduce this task to proving a key statement, Theorem~\ref{t.key}.
In Section~\ref{s.proof1} we recall the notion of energy of a measure, which has a central role in our
arguments. Then, in Sections~\ref{s.proof2} and~\ref{s.proof3}, we use it to prove Theorem~\ref{t.key},
thus completing the proof.

\section{Statement of main result}\label{s.statementofresults}

Let $X=\{1, \dots, q\}$ and $f:M\to M$ be the shift map on $M=X^\NN$.
We use $x=(i_n)_n$ to represent a generic element of $M$.
For any $m\ge 0$, $n\ge 1$ and $j_0, \dots, j_{n-1} \in X$, denote
$$
[m;j_0, \dots, j_{n-1}] = \{x\in M: i_{m+s} = j_s \text{ for $0 \le s \le n-1$}\}.
$$

Let $P=(P_{i,j})_{i,j\in X}$ be a stochastic matrix and assume that $P$ is \emph{aperiodic},
that is, there exists $N\ge 1$ such that $P^N_{i,j}>0$ for all $i, j \in X$.
By Perron-Fr\"obenius, there exists a unique vector $p=(p_i)_{i\in X}$ such that
$$
p_i>0 \quand \sum_{s\in X} p_s =1 \quand \sum_{s \in X} p_s P_{s,j} = p_j
\quad\text{for every $i, j \in X$.}
$$
Let $\mu$ be the associated Markov measure on $M$: given any $m\ge 0$ and $n\ge 1$,
$$
\mu([m;j_0, \dots, j_{n-1}]) = p_{j_0} P_{j_0,j_1} \cdots P_{j_{n-2},j_{n-1}}
\quad\text{for every $j_0, \dots, j_{n-1}\in X$.}
$$
Then $\mu$ is invariant under $f$ and the system $(f,\mu)$ is mixing.

We use $\pu, \pv, \pw$ to represent the lines spanned by vectors $\vu, \vv, \vw \in\RR^d\setminus\{0\}$.
Let $d\ge 2$ and $A:X\to\GL(d)$. Consider the linear cocycle
$$
F:M\times\RR^d\to M\times\RR^d,
\quad (x, \vv) \mapsto (f(x), A(i_0)\vv)
$$
and the associated projective cocycle
$$
\pF:M\times\PP(\RR^d)\to M\times\PP(\RR^d),
\quad (x,\pv) \mapsto (f(x), A(i_0)v).
$$
Here and in what follows we use the same notation for an element of $\GL(d)$ and
for its action in projective space.

Denote $A^n(x)=A(i_{n-1}) \cdots A(i_1)A(i_0)$, for each $x\in M$ and $n\ge 1$.
By Furstenberg, Kesten $[9]$, there exist numbers $\lambda_+(A,P) \ge \lambda_-(A,P)$ such that
$$
\begin{aligned}
\lambda_+(A,P) & = \lim_n \frac 1n \log\|A^n(x)\|\\
\lambda_-(A,P) & = \lim_n \frac 1n \log\|A^n(x)^{-1}\|^{-1}
\end{aligned}
\quad\text{for $\mu$-almost every $x\in M$.}
$$
They are called the \emph{extremal Lyapunov exponents} of $F$ relative to $\mu$.
We are going to prove:

\begin{maintheorem}\label{t.main}
For $d=2$, the functions $(A,P) \mapsto \lambda_\pm(A,P)$ are continuous,
relative to the natural (coefficient-induced) topology in the space of pairs $(A,P)$.
\end{maintheorem}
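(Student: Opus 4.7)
The plan is to follow the general framework used for the Bernoulli case in $[4, 17]$, replacing the notion of stationary measure (which is tied to the iid structure) with its Markov analogue, a \emph{stationary measure vector} $\eta=(\eta_i)_{i\in X}$, in which each $\eta_i$ is a probability on $\PP(\RR^2)$ and the vector satisfies the transfer relation
\[
 p_j \eta_j = \sum_{i\in X} p_i P_{i,j}\, A(i)_\ast \eta_i,
 \qquad j\in X.
\]
Existence of such vectors is a standard compactness/fixed-point exercise, and each gives rise to a $\pF$-invariant probability $\bar\eta$ on $M\times\PP(\RR^2)$ projecting to $\mu$. A Furstenberg--Ledrappier type identity then expresses the top exponent as
\[
 \lambda_+(A,P) = \max_\eta \sum_{i\in X} p_i \int \log\frac{\|A(i)\vv\|}{\|\vv\|}\, d\eta_i(\pv),
\]
the maximum taken over all stationary measure vectors.

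Upper semi-continuity of $\lambda_+$ is essentially automatic, either from the subadditive ergodic theorem or from the displayed formula. The substantive task is \emph{lower} semi-continuity. Suppose, for contradiction, that $(A^{(n)},P^{(n)})\to (A,P)$ but $\limsup_n \lambda_+(A^{(n)},P^{(n)}) < \lambda_+(A,P)$, and pick stationary measure vectors $\eta^{(n)}$ realising $\lambda_+(A^{(n)},P^{(n)})$. By componentwise weak-$\ast$ compactness a subsequence converges to some $\eta^\infty$, and continuity of the transfer relation makes $\eta^\infty$ stationary for $(A,P)$; the formula above evaluated at $\eta^\infty$ must therefore fall strictly short of $\lambda_+(A,P)$. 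The deficit can only be explained by mass of $\eta^{(n)}$ escaping towards a ``bad'' direction in $\PP(\RR^2)$. Adapting Furstenberg--Kifer $[10]$ to our setting, as planned in Sections~\ref{s.invariantsubspaces}--\ref{s.probabilisticrepellers}, pins this bad direction down: it must lie in an $A$-invariant line $L\subset\PP(\RR^2)$ that behaves as a projective repeller along $\mu$-typical orbits.

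With $L$ identified, the theorem is reduced (Section~\ref{s.proofmain}) to the key statement (Theorem~\ref{t.key}) that the vectors $\eta^{(n)}$ cannot concentrate too strongly near $L$. The natural tool for this is the \emph{energy} of each $\eta_i^{(n)}$, namely an integral of the form $\int\!\!\int d(\pv,\pw)^{-\alpha}\, d\eta_i^{(n)}(\pv)\, d\eta_i^{(n)}(\pw)$, which quantifies non-concentration at a point. Near $L$ the cocycle $A^{(n)}$ acts by expansion transverse to $L$, so pushing forward under the transfer operator strictly reduces energy in a controllable way; on the other hand the stationarity relation forces the energies, averaged according to $p$ and $P$, to be preserved. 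A careful quantitative comparison of these two effects should give a uniform upper bound on the energies of the $\eta_i^{(n)}$, contradicting the presumed concentration near $L$.

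The main obstacle is the one flagged in the introduction: the components $\eta_i^{(n)}$ of a stationary measure vector need not be comparable, so the expansion-versus-averaging argument cannot simply be run componentwise. A substantial part of the work (Sections~\ref{s.proof2}--\ref{s.proof3}) has to be devoted to a \emph{homogeneity} statement among the $\eta_i^{(n)}$, ensuring that whenever one component concentrates excessively near $L$, every other component does so to a comparable degree; only then can local information at one symbol be propagated through the Markov transitions. Once homogeneity is available the energy estimate closes and delivers lower semi-continuity of $\lambda_+$; continuity of $\lambda_-$ is then free, since $\lambda_+ + \lambda_- = \sum_{i\in X} p_i \log|\det A(i)|$ depends continuously on $(A,P)$.
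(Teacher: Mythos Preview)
Your roadmap matches the paper's: stationary measure vectors, the variational formula for $\lambda_+$, the Furstenberg--Kifer reduction to an invariant repelling line $L$, and an energy argument. But the energy you propose, the product integral $\iint d(\pv,\pw)^{-\alpha}\,d\eta_i(\pv)\,d\eta_i(\pw)$, does not interact with stationarity in the way you describe. Expanding $\eta_j=\sum_i c_{i,j}A(i)_*\eta_i$ in $\eta_j\times\eta_j$ produces cross terms $A(i)_*\eta_i\times A(i')_*\eta_{i'}$ with $i\neq i'$, so there is no recursion for the product energy and the sentence ``stationarity forces the energies \dots\ to be preserved'' is not correct. The paper works instead with the \emph{coupling} energy $e_\delta(\eta_i)=\inf_\xi\int \Psi\,d\xi$ over all self-couplings $\xi$ of $\eta_i$. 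Stationarity enters not as a conservation law but as the fact that the diagonal action $\fP_k^l$ of the transfer operator sends self-couplings of $\eta_k$ to self-couplings of $\eta_k$; the expansion at $L$ then gives $E_\delta(\fP_k^l\xi')\le(1-c\delta)E_\delta(\xi')+C$, and iterating bounds $e_\delta(\eta_k)$ uniformly, contradicting $e_\delta(\eta)=\infty$ when the atom has mass $>1/2$ (Lemma~\ref{l.fatatoms}).

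Two further points deserve correction. Your statement of Theorem~\ref{t.key} (``cannot concentrate too strongly near $L$'') is not what is proved: the actual conclusion is that each $\eta_k$ must be \emph{atomic}, and the reduction in Section~\ref{s.proofmain} then uses Proposition~\ref{p.5} plus hyperbolicity of some $A(i)$ to force $\#\cL_k\le 2$ and derive a contradiction case by case---this endgame is absent from your outline. And the ``homogeneity'' is not achieved by proving that the components $\eta_{k,i}$ concentrate comparably near $L$. Rather, Remark~\ref{r.atom_invariant} gives $\eta_i(\{L\})=\kappa$ independently of $i$, and in the case $\kappa\le 9/10$ the paper \emph{chooses} neighborhoods $V_{k,i}$ so that $\eta_{k,i}(V_{k,i})=(10/9)\kappa$ is the same constant for every $i$; this equal-mass normalization is precisely what makes Lemma~\ref{l.inout} work and allows the localized coupling $\tilde\xi_k$ to be corrected back to a self-coupling of $\eta_k\mid V_k$ with bounded energy cost (Lemmas~\ref{l.couplings}--\ref{l.step23}).
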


The special case of this theorem when $\mu$ is a Bernoulli measure, that is, when
$P_{i,j} = p_j$ for every $i, j \in X$, is contained in the main result of Bocker, Viana $[4]$.
Recently, Avila, Eskin, Viana announced that, still in the Bernoulli case,
the theorem holds in arbitrary dimension $d\ge 2$.
Although their paper has not yet appeared, the $2$-dimensional version of their approach
has been presented in $[17, \text{Chapter 10}]$ and that was the starting point for our analysis
of the Markov case.

It is well known (see $[17, \text{Section 9.1}]$) that the functions $\lambda_+$ and $\lambda_-$ are,
respectively, upper semi-continuous and lower semi-continuous.
Thus, continuity holds automatically at any point such that $\lambda_-(A,P) = \lambda_+(A,P)$.
In what follows we assume that $\lambda_-(A,P) < \lambda_+(A,P)$.
We are going to prove continuity for $\lambda_+$: to deduce the corresponding statement for
$\lambda_-$ just use Lemma~\ref{l.somaOK} below.

When the stochastic matrix $P$ is just \emph{irreducible} (for every $i, j \in X$ there exists $N\ge 1$
such that $P^N_{i,j}>0$), our arguments remain valid to show that $A \mapsto \lambda_\pm(A,P)$ are continuous.
Moreover, one recovers continuous dependence on both variables, as in the conclusion of the theorem,
under the additional assumption that the probability vector $p$ is chosen depending continuously on
the matrix in a neighborhood of $P$ (this is automatic in the aperiodic case).


Most steps in the proof of Theorem~\ref{t.main} are valid for arbitrary dimension $d$.
We specialize to $d=2$ only when necessary.

\section{Measure vectors}\label{s.measurevectors}

The following simple notions have a central part in our arguments. We consider vectors $\eta=(\eta_i)_{i\in X}$
where each $\eta_i$ is a (positive) measure on $\PP(\RR^d)$.
With any such vector, we associate the \emph{skew-product measure} $m=\mu\ltimes\eta$ defined on $M\times\PP(\RR^d)$ by
\begin{equation}\label{eq.skew-product}
m = \sum_{i\in X} \big(\mu \mid [0;i]\big) \times \eta_i.
\end{equation}
We call $\eta$ a \emph{unit vector} if every $\eta_i$ is a probability measure on $\PP(\RR^d)$.
Then $m$ is a probability measure on $M\times\PP(\RR^d)$.

Let $\cP$ be the operator defined in the space of measure vectors $\eta$ by
\begin{equation}\label{eq.operator1}
(\cP\eta)_j(D) = \sum_{i\in X} \frac{p_iP_{i,j}}{p_j}\eta_i\big(A(i)^{-1}(D)\big)
\end{equation}
for any $j\in X$ and any measurable $D\subset\PP(\RR^d)$. We say that $\eta$ is \emph{$P$-stationary}
if it is a fixed point for $\cP$, that is, if it satisfies
$$
\sum_{i\in X} p_i P_{i,j} \eta_i(A(i)^{-1}(D)) = p_j\eta_j(D)
$$
for every $j\in X$ and every measurable $D\subset\PP(\RR^d)$.
In the Bernoulli case, this means that $\sum_{i\in X} p_i \eta_i(A(i)^{-1}(D)) = \eta_j(D)$ for every $j$
and every $D$. Then the $\eta_j$ are all equal, and one recovers the more usual notion of stationary measure.

\begin{proposition}\label{p.1}
A unit vector $\eta$ is $P$-stationary if and only if the probability measure $m=\mu\ltimes\eta$ is $\pF$-invariant.
\end{proposition}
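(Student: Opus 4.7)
The plan is to verify $\hat F$-invariance of $m$ directly by computing both sides on a convenient generating $\pi$-system and then to recognize that the resulting identity is precisely the $P$-stationarity condition \eqref{eq.operator1}.

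First I would note that measurable rectangles of the form $[0;j]\times D$, with $j\in X$ and $D\subset\PP(\RR^d)$ Borel, form a $\pi$-system generating the product $\sigma$-algebra on $M\times\PP(\RR^d)$, so by the monotone class theorem it suffices to check $m(\hat F^{-1}(E))=m(E)$ for such rectangles. Unpacking the definition of $\hat F$, the point $(x,\bar\pv)$ lies in $\hat F^{-1}([0;j]\times D)$ iff $i_1=j$ and $A(i_0)\pv\in D$; partitioning according to the value of $i_0=i$ gives the disjoint decomposition
\[
\hat F^{-1}([0;j]\times D)=\bigsqcup_{i\in X}[0;i,j]\times A(i)^{-1}(D).
\]

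Next I would apply $m$ using the definition \eqref{eq.skew-product}. Since $[0;i,j]\subset[0;i]$, the skew-product formula yields $m\big([0;i,j]\times A(i)^{-1}(D)\big)=\mu([0;i,j])\,\eta_i(A(i)^{-1}(D))=p_iP_{i,j}\,\eta_i(A(i)^{-1}(D))$. Summing over $i$ and comparing with $m([0;j]\times D)=p_j\eta_j(D)$, the invariance identity on this rectangle reads
\[
\sum_{i\in X}p_iP_{i,j}\,\eta_i\big(A(i)^{-1}(D)\big)=p_j\eta_j(D),
\]
which is exactly the $P$-stationarity relation stated after \eqref{eq.operator1}. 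This simultaneously establishes both implications: if $\eta$ is $P$-stationary the displayed identity holds for every pair $(j,D)$ and hence $m$ is $\hat F$-invariant on a generating $\pi$-system; conversely, if $m$ is $\hat F$-invariant, specialising to rectangles $[0;j]\times D$ recovers the stationarity equation for each $j$ and each Borel $D$.

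There is essentially no technical obstacle; the only points requiring care are (i) justifying the extension from the generating $\pi$-system to the full $\sigma$-algebra, which is standard, and (ii) keeping track of the factor $p_i/p_j$ in \eqref{eq.operator1} versus the ``unnormalised'' form above — multiplying through by $p_j$ reconciles the two presentations, so no further work is needed.
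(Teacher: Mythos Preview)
Your computation is correct, but the $\pi$-system you have chosen is too small: the rectangles $[0;j]\times D$ do \emph{not} generate the product $\sigma$-algebra on $M\times\PP(\RR^d)$. The cylinders $[0;j]$, $j\in X$, generate only the finite sub-$\sigma$-algebra of $M$ consisting of sets determined by the zeroth coordinate; tensoring with the Borel sets of $\PP(\RR^d)$ still misses, for instance, $[0;j_0,j_1]\times\PP(\RR^d)$. Two probability measures on $M\times\PP(\RR^d)$ that both project to $\mu$ and agree on every $[0;j]\times D$ can genuinely differ (let the fiber measure depend on $i_1$ rather than $i_0$), so the monotone class step fails as written.

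The fix is easy and is exactly what the paper does: work instead with rectangles $[0;j,j_2,\dots,j_n]\times D$ of arbitrary length, which do form a generating $\pi$-system. Your preimage decomposition becomes
\[
\pF^{-1}\big([0;j,j_2,\dots,j_n]\times D\big)=\bigsqcup_{i\in X}[0;i,j,j_2,\dots,j_n]\times A(i)^{-1}(D),
\]
and the same computation, with the factor $\mu([0;j,j_2,\dots,j_n])$ carried along, gives
\[
\pF_*m\big([0;j,j_2,\dots,j_n]\times D\big)=\mu\big([0;j,j_2,\dots,j_n]\big)\,(\cP\eta)_{j}(D),
\]
that is, $\pF_*(\mu\ltimes\eta)=\mu\ltimes\cP\eta$. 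Both implications then follow exactly as you wrote.
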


\begin{proof}
For any $j, j_2, \dots, j_n \in X$ and any measurable set $D\subset \PP(\RR^d)$,
$$
\pF^{-1}\big([0;j, j_2, \dots, j_n] \times D\big)
= \bigcup_{i\in X} [0;i,j, j_2, \dots, j_n] \times A(i)^{-1}(D).
$$
Thus, using the definition~\eqref{eq.skew-product},
$$
\begin{aligned}
\pF_*m\big([0;j, j_2, \dots, j_n] \times D \big)
& = \sum_{i\in X} \mu\big([0;i,j, j_2, \dots, j_n]\big)\eta_i\big(A(i)^{-1}(D)\big).
\end{aligned}
$$
The right-hand side may be written as
$$
\sum_{i\in X} \frac{p_i P_{i,j}}{p_j} \mu([0;j, j_2, \dots, j_n])\eta_i\big(A(i)^{-1}(D)\big)
= \mu([0;j, j_2, \dots, j_n])(\cP\eta)_j(D).
$$
This proves that $\pF_* \big(\mu \ltimes \eta\big) = \mu \ltimes \cP\eta$.
The claim is a consequence.
\end{proof}

The set of unit vectors $\eta=(\eta_i)_{i\in X}$ has a natural topology induced by the weak$^*$ topology
in the space of probability measures on $\PP(\RR^d)$.

\begin{proposition}\label{p.2}
The set of $P$-stationary unit vectors $\eta$ is non-empty, compact and convex.
\end{proposition}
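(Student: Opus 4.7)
The plan is to view the set of unit vectors as a natural compact convex set in a locally convex topological vector space and to apply a standard fixed point theorem to the operator $\cP$.

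First I would observe that the ambient space here is the product $\prod_{i\in X}\cM(\PP(\RR^d))$, where $\cM(\PP(\RR^d))$ is the space of finite signed Borel measures on the compact space $\PP(\RR^d)$, endowed with the weak$^*$ topology. This is a locally convex topological vector space, and the subset of unit vectors is the finite product of the simplices of probability measures on $\PP(\RR^d)$. Each factor is non-empty, convex and, by Banach--Alaoglu, weak$^*$ compact; hence by Tychonoff the set of unit vectors is itself non-empty, convex and compact.

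Next I would check that $\cP$ is a continuous affine self-map of this set. Affinity is immediate from the definition~\eqref{eq.operator1}, since each $\eta_i\mapsto\eta_i\circ A(i)^{-1}=A(i)_*\eta_i$ is linear and the coefficients $p_iP_{i,j}/p_j$ are non-negative. For continuity it is enough to test against a continuous $\phi:\PP(\RR^d)\to\RR$: one has
$$
\int \phi\,d(\cP\eta)_j = \sum_{i\in X} \frac{p_iP_{i,j}}{p_j} \int (\phi\circ A(i))\,d\eta_i,
$$
and each integral on the right depends weak$^*$ continuously on $\eta_i$ because $\phi\circ A(i)$ is continuous on $\PP(\RR^d)$. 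Finally, to see that $\cP$ preserves unit vectors, evaluate at $D=\PP(\RR^d)$ and use the Perron--Fr\"obenius identity $\sum_{i\in X}p_iP_{i,j}=p_j$: this gives $(\cP\eta)_j(\PP(\RR^d))=1$.

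With these ingredients in place the existence of a $P$-stationary unit vector follows from the Schauder--Tychonoff fixed point theorem applied to $\cP$ on the compact convex set of unit vectors. (Alternatively, and equivalently, one can take any unit vector $\eta^{(0)}$, form the Cesaro averages $\frac1n\sum_{k=0}^{n-1}\cP^k\eta^{(0)}$, and extract a weak$^*$ limit point using Proposition~\ref{p.1} to interpret this as producing a $\pF$-invariant probability with first marginal $\mu$.) The set of fixed points of $\cP$ is closed in the set of unit vectors by the continuity of $\cP$, hence compact, and it is convex because $\cP$ is affine. No single step is genuinely hard here; the one mild subtlety worth being careful about is the weak$^*$ continuity of the pushforwards $\eta_i\mapsto A(i)_*\eta_i$, which relies only on the continuity of the projective maps $A(i)$ on the compact space $\PP(\RR^d)$.
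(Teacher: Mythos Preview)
Your proof is correct. For continuity of $\cP$, closedness of the fixed-point set, and convexity, your arguments are verbatim those of the paper. The only difference is in the existence step: the paper does not invoke Schauder--Tychonoff but instead carries out explicitly the Ces\`aro construction you mention parenthetically, forming $\eta_n=\frac{1}{n}\sum_{l=0}^{n-1}\cP^l\xi$ from an arbitrary unit vector $\xi$ and using a telescoping identity together with the continuity of $\cP$ to show that any accumulation point is a fixed point. Your fixed-point packaging is slightly slicker and makes the role of compactness and convexity of the ambient set more visible; the paper's hands-on version is more self-contained and avoids appealing to a black-box theorem. Substantively the two routes are the same.
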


\begin{proof}
First, note that the operator $\cP$ is continuous. Indeed, if $(\eta_n)_n$ is a sequence of unit vectors
converging to some $\eta$ then, given any continuous function $\phi:\PP(\RR^d)\to\RR$,
$$
\int \phi \, d(\cP\eta_n)_j = \sum_{i\in X} \frac{p_iP_{i,j}}{p_j} \int (\phi \circ A(i)) \, d\eta_{n, i}
$$
for every $j \in X$ and every $n\ge 1$. Since $\eta_{n,i}\to\eta_i$ in the weak$^*$ topology,
the expression on the right-hand side converges to
$$
\sum_{i\in X} \frac{p_iP_{i,j}}{p_j} \int (\phi \circ A(i)) \, d\eta_i
= \int \phi \, d(\cP\eta)_j.
$$
This proves that $(\cP\eta_n)_j$ converges to $(\cP\eta)_j$ in the weak$^*$ topology, for every $j\in X$,
as claimed.

Next, let $\xi$ be an arbitrary unit vector and define, for $j\in X$ and $n\ge 1$,
$$
\eta_{n, j}=\frac{1}{n}\sum_{l=0}^{n-1}(\cP^{l}\xi)_j.
$$
Let $\eta$ be an accumulation vector of the sequence $(\eta_n)_n$, where $\eta_n=(\eta_{n, j})_{j\in X}$.
Observe that
$$
(\cP\eta_n)_j
=\frac{1}{n}\sum_{l=0}^{n-1}(\cP^{l+1}\xi)_j
=\eta_{n,j} + \frac{1}{n} (\cP^n\xi)_j - \frac 1n \xi_j.
$$
So, given any continuous function $\phi:M\to\RR$,
$$
\lim_n \int \phi \, d(\cP\eta_n)_j
=\lim_n \big(\int \phi \, d\eta_{n,j}+\frac{1}{n} \int \phi \, d(\cP^{n}\xi)_j- \frac{1}{n} \int \phi \, d \xi_j\big)
=\int \phi \, d\eta_j.
$$
This means that $\eta_j = \lim_n (\cP\eta_n)_j$. We also have $\lim_n(\cP\eta_n)_j=(\cP\eta)_j$, by the continuity of
the operator $\cP$. It follows that $(\cP\eta)_j = \eta_j$ for every $j\in X$, which means that $\eta$ is $P$-stationary.
This proves that stationary vectors do exist.

Let $(\eta_n)_n$ be a sequence of $P$-stationary unit vectors converging to $\eta$.
For each $j\in X$ and $n \ge 1$, we have $(\cP\eta_n)_j=\eta_{n, j}$ and $\lim_n \eta_{n, j}=\eta_j$.
As we have already seen, $\lim_n(\cP\eta_n)_j=(\cP\eta)_j$.
Then $(\cP\eta)_j=\eta_j$, which means that $\eta$ is a $P$-stationary unit vector.
This implies the set of $P$-stationary unit vectors is closed and, consequently, is compact.

Let $\eta_1$ and $\eta_2$ be $P$-stationary unit vectors and consider $\eta=a_1\eta_1 + a_2\eta_2$.
For each $j\in X$, we have
$$
\eta_j(D)=a_1\eta_{1, j}(D) + a_2\eta_{2, j}(D)=a_1(\cP\eta_1)_j(D)+a_2(\cP\eta_2)_j(D)\\
$$
$$
=\sum_{i\in X} a_1\frac{p_iP_{i,j}}{p_j}\eta_{1, i}\big(A(i)^{-1}(D)\big)
+\sum_{i\in X} a_2\frac{p_iP_{i,j}}{p_j}\eta_{2, i}\big(A(i)^{-1}(D)\big)\\
$$
$$
=\sum_{i\in X} \frac{p_iP_{i,j}}{p_j}\eta_ i\big(A(i)^{-1}(D)\big)=(\cP\eta)_j(D).
$$
This proves that the set of $P$-stationary unit vectors is convex.
\end{proof}

\begin{proposition}\label{p.3}
The set $\{(P,\eta): \eta \text{ is a $P$-stationary unit vector}\}$ is closed.
\end{proposition}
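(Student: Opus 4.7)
The plan is to mimic the closedness argument from the second part of Proposition~\ref{p.2}, but now with the operator $\cP$ itself varying along the sequence. Concretely, take a sequence of pairs $(P_n,\eta_n) \to (P,\eta)$ with each $\eta_n$ a $P_n$-stationary unit vector, and I want to conclude that $\cP\eta = \eta$, where $\cP$ is the operator associated with $P$ via \eqref{eq.operator1}. Writing $\cP_n$ for the operator associated with $P_n$, the stationarity hypothesis gives $(\cP_n\eta_n)_j = \eta_{n,j}$ for every $j\in X$ and every $n$, so it suffices to check that $(\cP_n\eta_n)_j \to (\cP\eta)_j$ in the weak${}^*$ topology.

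The first ingredient is that the Perron--Fr\"obenius probability vector $p(P)$ depends continuously on $P$ in a neighborhood of any aperiodic stochastic matrix. This follows from the fact that, by Perron--Fr\"obenius, $1$ is a simple eigenvalue of $P$ with a strictly positive eigenvector, so by the standard perturbation theory of simple eigenvalues the normalized eigenvector $p_n = p(P_n)$ converges to $p(P)$ whenever $P_n \to P$. In particular, the coefficients $p_{n,i}P_{n,i,j}/p_{n,j}$ that appear in the definition of $\cP_n$ converge to the corresponding coefficients $p_i P_{i,j}/p_j$ of $\cP$.

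The second ingredient is a joint continuity statement: if $\eta_n \to \eta$ in the product weak${}^*$ topology on unit vectors and the coefficients of $\cP_n$ converge to those of $\cP$, then $\cP_n\eta_n \to \cP\eta$. For any continuous $\phi:\PP(\RR^d)\to\RR$ I would write
$$
\int \phi \, d(\cP_n\eta_n)_j
= \sum_{i\in X} \frac{p_{n,i}P_{n,i,j}}{p_{n,j}} \int (\phi\circ A(i)) \, d\eta_{n,i},
$$
and pass to the limit term by term: the scalar factors converge by the first ingredient, and each integral $\int(\phi\circ A(i))\,d\eta_{n,i}$ converges to $\int(\phi\circ A(i))\,d\eta_i$ because $\phi\circ A(i)$ is continuous on $\PP(\RR^d)$ and $\eta_{n,i}\to\eta_i$ weak${}^*$. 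The limit equals $\int \phi\, d(\cP\eta)_j$, so $(\cP_n\eta_n)_j \to (\cP\eta)_j$ weak${}^*$, as needed. Combining this with $(\cP_n\eta_n)_j = \eta_{n,j} \to \eta_j$ gives $(\cP\eta)_j = \eta_j$, proving closedness.

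The only genuine point of care is the continuity of $P \mapsto p(P)$, since this is where aperiodicity is used; everything else is a routine exchange of limits allowed by the fact that $X$ is finite (so the sums over $i\in X$ have only finitely many terms) and that the cocycle matrices $A(i)$ act continuously on $\PP(\RR^d)$. If one wanted to state Proposition~\ref{p.3} for merely irreducible $P$, one would have to restrict to pairs where $p$ is chosen continuously, as already noted after Theorem~\ref{t.main}.
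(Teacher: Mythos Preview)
Your proposal is correct and follows essentially the same approach as the paper: write out the stationarity condition integrated against a continuous test function and pass to the limit term by term, using convergence of the coefficients $p_{n,i}P_{n,i,j}/p_{n,j}$ and weak${}^*$ convergence of the $\eta_{n,i}$. The only minor difference is that the paper's proof also lets the cocycle $A$ vary along the sequence (writing $A_k(i)\to A(i)$, which is the form actually invoked in Section~\ref{s.invariantsubspaces}), but this is the same argument with $\phi\circ A_k(i)$ in place of $\phi\circ A(i)$.
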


\begin{proof}
Let $(P_k, \eta_k)_{k>0}$ be a sequence converging to $(P, \eta)$, where each $\eta_k$ is a $P_k$-stationary unit vector.
This means that, for any $j\in X$ and $k \ge 1$
$$
\eta_{k, j}
= (\cP_k\eta_k)_j
= \sum_{i\in X} \frac{p_{k, i}P_{k,i,j}}{p_{k, j}} A_k(i)_*\eta_{k, i}
$$
Then, given any continuous function $\phi:\PP(\RR^d)\to\RR$,
$$
\int \phi \, d\eta_{k,j}
= \sum_{i\in X} \frac{p_{k,i}P_{k,i,j}}{p_{k,j}} \int (\phi \circ A_k(i)) \, d\eta_{k,i}.
$$
The left-hand side converges to $\int \phi \, d\eta_j$ and the right-hand side converges to
$$
\sum_{i\in X} \frac{p_{i}P_{i,j}}{p_j} \int (\phi \circ A(i)) \, d\eta_{i}
= \int \phi \, d(\cP\eta)_j,
$$
because $p_{k, i}$ and $P_{k,i,j}$ and $\eta_{k ,j}$ and $A_k(i)$ converge, respectively,
to $p_i$ and $P_{i,j}$ and $\eta_j$ and $A(i)$.
Thus, $\eta_j = (\cP\eta)_j$ for every $j\in X$. This means that $\eta$ is a $P$-stationary unit vector.
\end{proof}

A $P$-stationary vector $\eta$ is \emph{atomic} if there exists $\pv\in\PP(\RR^d)$ such that $\{\pv\}$ has
positive $\eta_i$-measure for some $i\in X$. Then $\pv$ is an \emph{atom} of $\eta$.

\begin{proposition}\label{p.5}
If $\eta$ is an atomic $P$-stationary unit vector then there exists a finite non-empty set
$\cL\subset\PP(\RR^d)$ such that $A(i)\pv\in\cL$ and $\eta_i(\{\pv\})>0$ for every $\pv\in\cL$ and
every $i\in X$.
\end{proposition}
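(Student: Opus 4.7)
My plan is to extract, from the $P$-stationarity of $\eta$, a rigid pointwise identity relating the atomic parts of the $\eta_i$'s across the Markov chain, and then to take $\cL$ as the projection of a single admissible forward orbit of an atom.

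Writing $m_i(\pv) := \eta_i(\{\pv\})$, stationarity reads
\[
p_j\,m_j(\pv) = \sum_{i\in X} p_i P_{i,j}\,m_i(A(i)^{-1}\pv).
\]
A weighted Cauchy--Schwarz (with weights $p_iP_{i,j}$, whose total is $p_j$) yields
\[
p_j\,m_j(\pv)^2 \le \sum_{i\in X} p_iP_{i,j}\,m_i(A(i)^{-1}\pv)^2.
\]
Summing over $\pv$, with $\beta_i := \sum_\pv m_i(\pv)^2 < \infty$ (finite because $\eta_i$ is a probability), gives $p_j\beta_j \le \sum_i p_iP_{i,j}\beta_i$ for every $j$; summing again over $j$ produces equality (since $\sum_j P_{i,j} = 1$), forcing each $j$-inequality to be equality. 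The uniqueness of the Perron--Frobenius left-eigenvector of the aperiodic $P$ then forces $\beta$ to be constant, and, crucially, the pointwise equality in Cauchy--Schwarz delivers the key identity
\[
m_i(\pv) = m_j(A(i)\pv)\qquad\text{for every }\pv\text{ and every }i,j\in X\text{ with }P_{i,j}>0.
\]

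With this identity at hand, I would fix any atom $(\pv_0,i_0)$ with $c := m_{i_0}(\pv_0) > 0$ (available by the atomic hypothesis) and form the admissible forward orbit
\[
\cO = \bigl\{\,(A(i_{n-1})\cdots A(i_0)\pv_0,\;i_n)\,:\,n\ge 0,\;(i_0,\ldots,i_n)\ \text{admissible in}\ P\,\bigr\}.
\]
Iterating the key identity along admissible paths, every pair $(\pv,i)\in\cO$ satisfies $m_i(\pv) = c$. Because each $\eta_i$ is a probability measure and admits at most $\lfloor 1/c\rfloor$ atoms of mass $c$, the set $\cO$ is finite. Taking $\cL$ to be its projection to $\PP(\RR^d)$ produces a finite non-empty set of atoms; aperiodicity of $P$, through some $N$ with $P^N > 0$ entrywise, guarantees that every $i\in X$ appears as the second coordinate of some element of $\cO$, so every $\eta_i$ exhibits an atom in $\cL$, and the closure of $\cO$ under Markov-admissible transitions yields the invariance $A(i)\pv\in\cL$ claimed in the statement.

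The main technical obstacle is precisely the passage from the globally summed Cauchy--Schwarz inequality to the sharp pointwise identity: without aperiodicity (hence without uniqueness of the Perron--Frobenius stationary vector for $P$), the line-by-line $j$-equalities would not follow, and the identity propagating the atomic masses rigidly along admissible transitions could fail.
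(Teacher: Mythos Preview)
Your Cauchy--Schwarz derivation of the key identity
\[
m_i(\pw)=m_j(A(i)\pw)\qquad\text{whenever }P_{i,j}>0
\]
is correct and is a genuinely different route from the paper's ``maximal atom'' argument (which uses only that $\eta_j(\{\pw\})$ is a convex combination of the $\eta_i(\{A(i)^{-1}\pw\})$). The construction of the forward orbit $\cO$ and its finiteness are also fine.

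The gap is in your last paragraph, and it is not repairable as stated. Closure of $\cO$ under \emph{Markov-admissible} transitions only yields $A(i)\pv\in\cL$ when the pair $(\pv,i)$ itself lies in $\cO$; it says nothing about $A(i)\pv$ for an arbitrary $i\in X$ once $\pv\in\cL$ has been fixed. Likewise, aperiodicity only shows that each $\eta_i$ has \emph{some} atom in $\cL$, not that $\eta_i(\{\pv\})>0$ for \emph{every} $\pv\in\cL$ and every $i$. Both conclusions can fail. Take $X=\{1,2,3\}$ with $P_{1,1}=P_{1,2}=\tfrac12$, $P_{2,3}=P_{3,1}=1$ (aperiodic, since $P^4>0$), and let $A(1)=\id$, $A(2)=R_{\pi/6}$, $A(3)=R_{\pi/3}$ be rotations acting on $\PP(\RR^2)$ (parametrized by angle mod $\pi$). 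Then
\[
\eta_1=\eta_2=\tfrac12(\delta_0+\delta_{\pi/2}),\qquad \eta_3=\tfrac12(\delta_{\pi/6}+\delta_{2\pi/3})
\]
is $P$-stationary (one checks $A(3)_*\eta_3=\eta_1$ and $A(2)_*\eta_2=\eta_3$). Starting from the atom $(\pv_0,i_0)=(0,1)$, your set is $\cL=\{0,\pi/6,\pi/2,2\pi/3\}$; but $A(2)(\pi/6)=\pi/3\notin\cL$, and $\eta_1(\{\pi/6\})=0$. In fact no finite $\cL$ can satisfy the proposition here, since the atoms of $\eta_1$ and $\eta_3$ are disjoint. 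The paper's own proof has the very same lacuna: the implication~\eqref{eq.average} is only justified for those $i$ with $P_{i,j}>0$, not ``for every $i\in X$'' as written. What both arguments actually prove is the weaker statement in which the conclusions $A(i)\pv\in\cL$ and $\eta_i(\{\pv\})>0$ are asserted only along admissible transitions of the chain; that weaker version is what is really used later in Section~\ref{s.proofmain}.
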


\begin{proof}
Let $\delta$ be the largest $\eta_i$-measure of any point $\pv \in \PP(\RR^d)$ for any $i \in X$.
By assumption, $\delta>0$.
Let $\cL$ be the set of all $\pw\in\PP(\RR^d)$ such that $\eta_j(\{\pw\})=\delta$ for some $j \in X$.
Notice that $\cL$ is  non-empty and $\#\cL \le \# X/\delta < \infty$, because the $\eta_i$ are
probability measures. Since
$$
\eta_j\big(\{\pw\}\big) = \sum_{i\in X} \frac{p_i P_{i,j}}{p_j} \eta_i\big(\{A(i)^{-1}\pw\}\big)
\quand
\sum_{i\in X} \frac{p_i P_{i,j}}{p_j} = 1,
$$
we have that
\begin{equation}\label{eq.average}
\eta_j\big(\{\pw\}\big)=\delta \quad \Rightarrow \quad \eta_i\big(\{A(i)^{-1}\pw\}\big)=\delta
\quad\text{for every $i\in X$.}
\end{equation}
In particular, $A(i)^{-1}(\cL) \subset \cL$ for every $i\in X$.
Since $\cL$ is finite, this means that $A(i)^{-1}(\cL)=\cL$ for every $i\in X$.
So, given any $\pv\in\cL$ and $i\in X$, we can find $\pw\in \cL$ such that $\pv=A(i)^{-1}\pw$.
By \eqref{eq.average}, it follows that $\eta_i(\{\pv\})=\delta$.
\end{proof}

Consider the continuous function $\Phi:M\times\PP(\RR^d)\to\RR$ defined by
\begin{equation}\label{eq.Phi}
\Phi(x,\pv)
= \log \frac{\|A(x)\vv\|}{\|\vv\|}
= \log \frac{\|A(i_0)\vv\|}{\|\vv\|}.
\end{equation}
Clearly,
$$
\int \Phi \, d(\mu\ltimes\eta) = \sum_{i\in X} p_i \int \log \frac{\|A(i)\vv\|}{\|\vv\|} \, d\eta_i(v).
$$
Results closely related to the next proposition are discussed in $[17, \text{Chapter 6}]$,
mostly in the Bernoulli context.
The proposition is true in any dimension, but we choose to restrict ourselves to the case $d=2$,
which is simpler and suffices for all our purposes.

\begin{proposition}\label{p.4}
$$
\lambda_+(A,P) = \max\big\{\int \Phi \, d(\mu\ltimes\eta): \eta \text{ is a $P$-stationary unit vector}\big\}.
$$
\end{proposition}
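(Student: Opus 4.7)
The strategy is to establish a sharp upper bound for all $P$-stationary unit vectors and then exhibit a specific $\eta^+$ realizing it. For the upper bound, take any $P$-stationary unit vector $\eta$ and write $m=\mu\ltimes\eta$, which is $\pF$-invariant by Proposition~\ref{p.1}. Averaging the invariance identity $\int\Phi\,dm=\int\Phi\circ\pF^k\,dm$ over $k=0,\dots,n-1$ and recognizing that $\sum_{k=0}^{n-1}\Phi\circ\pF^k(x,\pv)=\log(\|A^n(x)\vv\|/\|\vv\|)$ gives
$$
\int\Phi\,dm = \int\frac{1}{n}\log\frac{\|A^n(x)\vv\|}{\|\vv\|}\,dm(x,\pv) \le \int\frac{1}{n}\log\|A^n(x)\|\,d\mu(x).
$$
The right-hand side tends to $\lambda_+(A,P)$ as $n\to\infty$ by Furstenberg--Kesten, so $\int\Phi\,dm\le\lambda_+(A,P)$.

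For the matching lower bound I pass to the natural extension. Let $(\tilde M,\tilde\mu,\tilde f)$ be the two-sided Markov shift on $X^\ZZ$ and $\pi:\tilde M\to M$ the projection onto non-negative coordinates, so that the lifted cocycle $\tilde F$ is invertible. Assume first $\lambda_+>\lambda_-$. Oseledets on the invertible system provides a measurable $\tilde F$-invariant top direction $\tilde x\mapsto E^+(\tilde x)\in\PP(\RR^2)$ satisfying $A(i_0)E^+(\tilde x)=E^+(\tilde f\tilde x)$; being characterized by fastest decay under the inverse cocycle, $E^+$ depends measurably only on the past coordinates $(\ldots,i_{-2},i_{-1})$. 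Then $\tilde\nu=\int\delta_{(\tilde x,E^+(\tilde x))}\,d\tilde\mu$ is $\tilde F$-invariant, and its push-forward $\nu^+$ under $\pi\times\mathrm{id}$ is $\pF$-invariant because $\pF\circ(\pi\times\mathrm{id})=(\pi\times\mathrm{id})\circ\tilde F$.

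The crux is to show $\nu^+$ has the skew-product form $\mu\ltimes\eta^+$ required by~\eqref{eq.skew-product}, i.e.\ that its disintegration over $x\in M$ depends only on $i_0$. This is where the Markov structure enters decisively: given the future $(i_0,i_1,\dots)$, the conditional law of the past is that of the time-reversed Markov chain started from $i_0$ with transitions $\tilde P_{j,i}=p_iP_{i,j}/p_j$, and in particular depends only on $i_0$. Since $E^+(\tilde x)$ is past-measurable, its conditional law given $\pi(\tilde x)=x$ depends only on $i_0$; call this law $\eta^+_{i_0}$. Then $\nu^+=\mu\ltimes\eta^+$, and Proposition~\ref{p.1} yields that $\eta^+$ is $P$-stationary.

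Finally, $\int\Phi\,d(\mu\ltimes\eta^+)=\int\log\|A(i_0)e^+(\tilde x)\|\,d\tilde\mu(\tilde x)$ for any measurable unit-vector section $e^+(\tilde x)\in E^+(\tilde x)$. The invariance of $E^+$ telescopes the ergodic averages of this integrand into $\frac{1}{n}\log\|A^n(\tilde x)e^+(\tilde x)\|$, which converges $\tilde\mu$-a.e.\ to $\lambda_+(A,P)$ by Oseledets, forcing the integral to equal $\lambda_+(A,P)$. In the degenerate case $\lambda_+=\lambda_-$, any $P$-stationary $\eta$ (which exists by Proposition~\ref{p.2}) attains the bound, since $n^{-1}\log\|A^n(x)\vv\|\to\lambda_+$ for every nonzero $\vv$. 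The main obstacle is the skew-product identification of the third paragraph: it is precisely this step that exploits the Markov property in a way with no counterpart in the general setting.
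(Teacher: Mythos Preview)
Your proof is correct and follows essentially the same route as the paper: the upper bound via invariance and the bound $\|A^n(x)\vv\|\le\|A^n(x)\|\|\vv\|$, then the construction of a maximizing $\eta^+$ by lifting to the two-sided shift, taking the Dirac masses on the Oseledets unstable direction, and pushing forward. The only stylistic difference is in the skew-product identification: the paper writes the Markov property as the product decomposition $\hat\mu\mid[0;i]=p_i(\mu_i^-\times\mu_i^+)$ and computes $\hat m(C^-\times\{i\}\times C^+\times D)$ explicitly, whereas you phrase the same fact probabilistically via the time-reversed chain; these are equivalent, and your treatment of the degenerate case $\lambda_+=\lambda_-$ is a harmless addition (the paper has already excluded that case before the proposition).
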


\begin{proof}
Let $m$ be an arbitrary $\pF$-invariant probability measure on $M\times\PP(\RR^2)$.
For every $(x,\pv)$ and every $n\ge 1$,
$$
\frac 1n \sum^{n-1}_{j=0} \Phi(\pF^{j}(x,\pv))
= \frac 1n \log \frac{\|A^{n}(x)\vv\|}{\|\vv\|}
\le \frac 1n\log\|A^{n}(x)\|.
$$
Let $\tilde\Phi(x,\pv)$ denote the limit of the left-hand side. This is well-defined $m$-almost everywhere,
by the ergodic theorem, and the previous inequality implies that $\tilde\Phi(x,\pv)\le\lambda_+(A,P)$. So,
\begin{equation}\label{eq.ineq1}
\int \Phi \, dm
= \int \tilde\Phi \, dm
\le \lambda_+(A,P).
\end{equation}
In particular, $\lambda_+(A,P)$ is an upper bound for the integral of $\Phi$ relative
to any $\pF$-invariant measure of the form $m=\mu\ltimes\eta$.
We are left to prove that the equality in \eqref{eq.ineq1} does hold for some such measure.

It is convenient to consider the natural extension of the cocycle $F$
$$
\hat F:\hat M \times \RR^2 \to \hat M \times \RR^2,
\quad \hat F(\hat x,\vv) = (\hat f(\hat x), A(i_0)\vv),
$$
where $\hat f:\hat M \to \hat M$ is the shift on $\hat M = X^\ZZ$.
Denote by $\hat \mu$ the $\hat f$-invariant Markov measure defined on $\hat M$ by the stochastic matrix $P$.
Clearly, $(\hat F, \hat \mu)$ has the same Lyapunov exponents as $(F,\mu)$.
Since we assume that $\lambda_-(A,P) < \lambda_+(A,P)$, the Oseledets decomposition of $(\hat F, \hat \mu)$
has the form
$$
\RR^2 = E^u_{\hat x} \oplus E^s_{\hat x}, \quad\text{with } \dim E^u_{\hat x} = \dim E^s_{\hat x} = 1
$$
(the invariant bundles $E^u$ and $E^s$ are associated with the Lyapunov exponents $\lambda_+(A,P)$
and $\lambda_-(A,P)$, respectively) for $\hat\mu$-almost every $\hat x=(i_n)_{n\in\ZZ}\in\hat M$.



Let $\phF:\hat M\times\RR^2\to \hat M\times\RR^2$ be the projective cocycle associated with $\hat F$.
Let $\pi:\hat M \to M$ and $\Pi:\hat M \times \PP(\RR^2) \to M \times \PP(\RR^2)$, $\Pi(\hat x, \pv)=(\pi(\hat x),\pv)$
and  $\pi_1: M \times \PP(\RR^2) \to M$ and $\hat\pi_1: \hat M \times \PP(\RR^2) \to \hat M$ be the canonical projections.
If $\hat m$ is a $\phF$-invariant measure projecting down to $\hat \mu$ (under $\hat\pi_1$) then $m=\Pi_*\hat m$
is an $\pF$-invariant measure projecting down to $\mu$ (under $\pi_1$): that is an immediate consequence of the relations
$\pF \circ \Pi =\Pi \circ \phF$ and $\pi_1 \circ \Pi = \pi \circ \hat\pi_1$.

Take $\hat m$ to be the probability measure on $\hat M \times \PP(\RR^2)$ defined by
$$
\hat m (C \times D) = \hat\mu\big(\{\hat x \in C: E^u_{\hat x} \in D\}\big),
$$
for any measurable sets $C \subset \hat M$ and $D \subset \PP(\RR^2)$.
In other words, $\hat m$ projects down to $\hat\mu$ and its conditional probabilities along the fibers are given by the
Dirac masses at the unstable subspace $E_{\hat x}^u$. It is clear that $\hat m$ is $\phF$-invariant, since the measure
$\hat\mu$ is $\hat f$-invariant and the unstable bundle $E^u$ is $\hat F$-invariant.
So, $m$ is a $\pF$-invariant probability measure.

Next, we prove that $m$ is a skew-product measure. The key observation is that, since the unstable subspace
$E^u_{\hat x}$ is determined by the negative iterates of the cocycle alone, each the conditional probability
$$
\text{$\hat m_{\hat x} = \delta_{E^u_{\hat x}}$ depends only on the negative component $x^-=(i_n)_{n<0}$ of $\hat x\in\hat M$.}
$$
Moreover, the Markov measure $\hat \mu$ is a product measure restricted to each cylinder $[0;i]\subset\hat M$:
there are probability measures $\mu_i^-$ and $\mu_i^+$ on $M^-=X^{\{n<0\}}$ and $M^+=X^{\{n>0\}}$, respectively,
such that
$$
\hat \mu \mid [0;i] = p_i \, (\mu_i^- \times \mu_i^+).
$$
Given $i\in X$ and measurable sets $C^-\subset M^-$, $C^+ \subset M^+$ and $D\subset\PP(\RR^2)$,
$$
\begin{aligned}
\hat m(C^-\times \{i\} \times C^+ \times D)
& = \int_{C^-\times \{i\} \times C^+} \hat m_{\hat x}(D) \, d\hat\mu(\hat x)\\
& = p_i \mu_i^+(C^+) \int_{C^-} \hat m_{x^-}(D) \, d\mu_i^-(x^-)
\end{aligned}
$$
whereas
$$
\hat\mu(C^- \times\{i\} \times  C^+) = p_i \, \mu_i^-(C^-) \mu_i^+(C^+).
$$
In particular, the quotient between these two numbers is independent of $C^+$. Any subset $C$ of the one-sided cylinder
$[0;i]\subset M$ may be written as $C=\{i\} \times C^+$ with $C^+\subset M^+$. Then,
$$
\eta_i(D)
= \frac{m(C \times D)}{\mu(C)}
= \frac{\hat m(M^- \times\{i\} \times C^+ \times D)}{\hat\mu(M^- \times\{i\} \times C^+)}
$$
is independent of $C$ and defines a probability measure $\eta_i$ on $\PP(\RR^2)$.
Let $\eta$ be the measure vector formed by these probabilities.
For any measurable $C\subset M$ and $D\subset\PP(\RR^2)$,
$$
m\big(C \times D\big)
= \sum_{i\in X} m((C \cap [0;i]) \times D)
= \sum_{i\in X} \mu(C \cap [0;i]) \times \eta_i(D).
$$
This proves that $m = \mu \ltimes \eta$.

Finally, consider the function $\Psi: \hat M \times \PP(\RR^{d}) \to \RR$ defined by
$$
\Psi(\hat x,\pv)
= \log\frac{||A(i_{0})\vv||}{||\vv||}
= \Phi(\pi(\hat x), \pv).
$$
Clearly, $\int \Phi \, dm = \int \Psi \, d\hat m = \int \tilde \Psi \, d\hat m$,
where $\tilde\Psi$ is the Birkhoff time average:
$$
\tilde\Psi(\hat x, \pv)
= \lim_n \frac 1n \sum_{j=0}^{n-1}\Psi(\phF^j(\hat x,\pv))
= \lim_n \frac 1n \log\frac{\|A^n(x)\vv\|}{\|\vv\|}.
$$
Since the conditional probabilities of $\hat m$ are concentrated on the Oseledets
subspaces $E^u_{\hat x}$, we have $\tilde\Psi(\hat x, \pv)=\lambda_+(A,P)$ for
$\hat m$-almost every $(\hat x,\pv)$. It follows that $\int \Phi \, dm = \lambda_+(A,P)$.

This completes the proof of Proposition~\ref{p.4}.
\end{proof}

\section{Invariant subspaces}\label{s.invariantsubspaces}

Consider any sequence $(A_k, P_k)_k$ converging to $(A,P)$. Let $p_k$ be the probability vector associated with each $P_k$.
Moreover,

\begin{lemma}\label{l.somaOK}
$\lambda_+(A_k,P_k) + \lambda_-(A_k,P_k)$ converges to $\lambda_+(A,P) + \lambda_-(A,P)$.
\end{lemma}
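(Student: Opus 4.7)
The plan is to realize the sum $\lambda_+ + \lambda_-$ as an integral of a function that depends continuously and explicitly on $(A,P)$. Specifically, in dimension two the key identity I would prove is
$$
\lambda_+(A,P) + \lambda_-(A,P) = \sum_{i \in X} p_i \log|\det A(i)|,
$$
whose right-hand side is manifestly continuous in $(A,P)$ because $X$ is finite, $\GL(2)$ is open, and $p$ depends continuously on $P$ (automatic in the aperiodic case, as noted after the main theorem).

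To establish the identity, I would first use multiplicativity of the determinant: since $A^n(x) = A(i_{n-1}) \cdots A(i_0)$,
$$
\log|\det A^n(x)| = \sum_{s=0}^{n-1} \log|\det A(i_s)|.
$$
The function $x \mapsto \log|\det A(i_0)|$ is bounded, hence $\mu$-integrable, because $X$ is finite and each $A(i) \in \GL(2)$. Applying Birkhoff's ergodic theorem, together with the ergodicity of $(f,\mu)$ which follows from the mixing statement recorded in Section~\ref{s.statementofresults}, one obtains
$$
\lim_n \frac{1}{n}\log|\det A^n(x)| = \sum_{i \in X} p_i \log|\det A(i)|
\quad\text{for $\mu$-a.e.\ $x$.}
$$

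Next I would identify the left-hand side of this limit with $\lambda_+(A,P) + \lambda_-(A,P)$. In dimension two, $|\det A^n(x)|$ equals the product of the two singular values of $A^n(x)$; by the Oseledets theorem (or directly from the definitions of $\lambda_\pm$ in Section~\ref{s.statementofresults}, noting that $\|A^n(x)\|$ and $\|A^n(x)^{-1}\|^{-1}$ are precisely the top and bottom singular values), their logarithms divided by $n$ converge $\mu$-almost everywhere to $\lambda_+(A,P)$ and $\lambda_-(A,P)$ respectively. This yields the stated identity.

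Finally, applying the identity to both $(A_k,P_k)$ and $(A,P)$, and using that $A_k(i) \to A(i)$ in $\GL(2)$ and $p_{k,i} \to p_i$ for each $i \in X$, the finite sum $\sum_{i} p_{k,i} \log|\det A_k(i)|$ converges to $\sum_{i} p_i \log|\det A(i)|$, which gives the lemma. There is essentially no obstacle: the content of the lemma is the recognition that $\lambda_+ + \lambda_-$ is the Lyapunov exponent of the one-dimensional $\det$-cocycle, after which continuous dependence is a triviality. The only care required is that $\det A_k(i)$ stays bounded away from $0$ for large $k$, which is automatic by openness of $\GL(2)$.
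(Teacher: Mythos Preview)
Your proof is correct and follows essentially the same approach as the paper: both identify $\lambda_+(A,P)+\lambda_-(A,P)$ with $\int \log|\det A|\,d\mu = \sum_{i\in X} p_i \log|\det A(i)|$ (the paper invokes Oseledets, you use Birkhoff plus the singular-value observation, which amounts to the same thing in dimension two) and then conclude by the manifest continuity of this finite sum in $(A,P)$.
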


\begin{proof}
By the theorem of Oseledets $[14]$ and the ergodicity of $(f,\mu)$,
$$
\lambda_+(A,P)+\lambda_-(A,P)
= \lim_n \frac 1n \log |\det A^n(x)|
= \int \log |\det A| \, d\mu
$$
for $\mu$-almost every $x$, and analogously for $\lambda_+(A_k,P_k) + \lambda_-(A_k,P_k)$
for every $k$. Now observe that
$$
\int \log |\det A_k| \, d\mu_k
= \sum_{i\in X} p_{k,i} \log|\det A_k(i)|
$$
converges to
$$
\sum_{i\in X} p_i \log|\det A(i)|
= \int \log |\det A| \, d\mu
$$
when $k\to\infty$.
\end{proof}

For each $k$, let $\Phi_k:M\times\PP(\RR^d)\to\RR$ be defined as in \eqref{eq.Phi}, with $A_k$ in
the place of $A$. Moreover, let $\eta_k=(\eta_{k,i})_{i\in X}$ be a $P_k$-stationary unit vector
that realizes the largest Lyapunov exponent for $(A_k,P_k)$:
$$
\lambda_+(A_k,P_k)
= \int \Phi_k \, d(\mu_k\ltimes \eta_k)
= \sum_{i\in X} p_{k,i} \int \log\frac{\|A_k(i)\vv\|}{\|\vv\|} \, d\eta_{k,i}(\pv).
$$
Up to restricting to a subsequence, we may suppose that $(\eta_k)_k$ converges to some $\eta$ in the
weak$^*$ topology. By Proposition~\ref{p.3}, the vector $\eta$ is $P$-stationary. Moreover,
$$
\int \Phi_k \, d(\mu_k \ltimes \eta_k) \quad\text{converges to}\quad \int \Phi \, d(\mu\ltimes\eta).
$$
If $\eta$ realizes $\lambda_+(A,P)$ then we are done. In all that follows we suppose that
\begin{equation}\label{eq.contradiction}
\int \Phi \, d(\mu\ltimes\eta) < \lambda_+(A,P).
\end{equation}

\begin{proposition}\label{p.Lexists}
Under assumption \eqref{eq.contradiction}, there exists $L\in\PP(\RR^2)$ such that
\begin{equation}\label{eq.Lexists1}
\eta_i(\{L\}) > 0 \quand A(i)L=L \quad\text{for every $i\in X$}
\end{equation}
and, for $\mu$-almost all $x\in M$,
\begin{equation}\label{eq.Lexists2}
\lim_n \frac 1n \log \|A^n(x)\vv\| =
\left\{\begin{array}{ll}\lambda_-(A,P) & \text{if } \vv \in L\setminus\{0\}\\
                        \lambda_+(A,P) & \text{if } \vv \in \RR^2\setminus L.
       \end{array}\right.
\end{equation}
\end{proposition}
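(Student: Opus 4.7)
The plan is to identify $L$ as the Oseledets stable direction, which under assumption \eqref{eq.contradiction} must be essentially constant in $x$.

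First I would pass to the two-sided natural extension $(\hat F,\hat\mu)$ used in the proof of Proposition~\ref{p.4} and apply the Oseledets theorem to obtain the splitting $\RR^2 = E^u(\hat x) \oplus E^s(\hat x)$. Because $E^s$ is determined by the forward orbit, it descends to a Borel map $E^s:M\to\PP(\RR^2)$ satisfying the cocycle relation $A(i_0(x))E^s(x)=E^s(f(x))$, and for $\mu$-a.e.~$x$ the limit $\lim_n \frac{1}{n}\log\|A^n(x)\vv\|$ equals $\lambda_-(A,P)$ when $\vv\in E^s(x)\setminus\{0\}$ and $\lambda_+(A,P)$ when $\vv\in\RR^2\setminus E^s(x)$.

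Next set $m=\mu\ltimes\eta$ and let $\tilde\Phi$ denote the Birkhoff time-average of $\Phi$ under $\pF$. Since $d=2$ and $\lambda_-<\lambda_+$, $\tilde\Phi$ takes only the values $\lambda_\pm$ $m$-a.e., equal to $\lambda_-$ precisely on $S:=\{(x,\pv):\pv=E^s(x)\}$; combining \eqref{eq.contradiction} with $\int\Phi\,dm=\int\tilde\Phi\,dm$ forces $m(S)>0$. Since $S$ is $\pF$-invariant, so is $m|_S$; its projection onto $M$ is $f$-invariant and absolutely continuous with respect to $\mu$, hence equals $c_0\mu$ for some $c_0>0$ by ergodicity of $(f,\mu)$. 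Unwinding the skew-product structure of $m$ then yields
\[
\eta_{i_0(x)}\bigl(\{E^s(x)\}\bigr)=c_0\quad\text{for $\mu$-a.e.~}x\in M.
\]
In particular $\eta$ is atomic; for each $i$ the set $\Lambda_i:=\{\pv:\eta_i(\{\pv\})=c_0\}$ has cardinality at most $\lfloor 1/c_0\rfloor$, so $E^s$ takes only finitely many values $L_1,\dots,L_K\in\bigcup_i \Lambda_i$, and the cocycle relation together with the aperiodicity of $P$ yields $A(i)\Lambda_i=\Lambda_j$ whenever $P_{i,j}>0$.

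The final and decisive step, which I expect to be the main technical obstacle, is to show that $K=1$, so that $E^s\equiv L$ for a single $L\in\PP(\RR^2)$. The strategy is to invoke Proposition~\ref{p.5} applied to the atomic $\eta$ (yielding a finite common invariant set of atoms of maximal mass), combined with the uniqueness of the Oseledets stable direction at each point, the mixing of $(f,\mu)$, and the strict inequality $\lambda_-<\lambda_+$, in order to rule out any non-trivial permutation of $\{L_1,\dots,L_K\}$: on an orbit on which the induced permutations $\sigma_i$ act non-trivially, one obtains two or more candidate lines carrying the slow exponent $\lambda_-$ at a generic~$x$, contradicting the uniqueness of $E^s$. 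Once $E^s\equiv L$ is established, the cocycle relation gives $A(i)L=L$ for every $i\in X$, the display above gives $\eta_i(\{L\})=c_0>0$ for every $i$, and \eqref{eq.Lexists2} is immediate from the characterisation of $E^s$ recorded in the first paragraph.
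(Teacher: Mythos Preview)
Your setup is sound and in places cleaner than the paper's. Identifying $L_x$ directly with the Oseledets stable direction $E^s(x)$, and then using that $m\mid S$ projects to a constant multiple of $\mu$ to obtain $\eta_{i_0(x)}(\{E^s(x)\})=c_0$ for $\mu$-a.e.\ $x$, is an efficient replacement for the paper's Lemmas~\ref{l.Lexists1} and the first half of Lemma~\ref{l.Lexists2}. From this it indeed follows that $E^s$ takes only finitely many values $L_1,\dots,L_K$. (The side claim $A(i)\Lambda_i=\Lambda_j$ whenever $P_{i,j}>0$ is not justified by what you have: nothing prevents $\eta_i$ from having atoms of mass exactly $c_0$ that are not values of $E^s$. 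But you do not actually need this claim.)

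The genuine gap is in the last step, showing $K=1$. Your proposed mechanism --- that a non-trivial permutation of $\{L_1,\dots,L_K\}$ by some $A(i)$ would force ``two or more candidate lines carrying the slow exponent $\lambda_-$ at a generic $x$'' --- is incorrect. The permutation structure $A(i)L_r=L_{\sigma_i(r)}$ is perfectly compatible with the uniqueness of $E^s(x)$ at each point: at a given $x$ only $E^s(x)$ is slow, and the cocycle relation merely says $E^s(f(x))=A(i_0(x))E^s(x)$, which moves you from one $L_r$ to another as you move along the orbit. Neither Proposition~\ref{p.5} nor mixing, as you invoke them, produces two slow lines at a single point. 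What you actually need to rule out is a non-constant measurable $\phi:M\to\{1,\dots,K\}$ satisfying $\phi\circ f=\sigma_{i_0}\circ\phi$; this is a cohomological statement, not a consequence of Oseledets uniqueness.

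The paper closes this gap by a Lebesgue density argument (Lemma~\ref{l.Lexists2}): pick a value $a$ with $\mu(\{E^s=a\})>0$, choose cylinders $[0;j_0,\dots,j_{n_k}]$ on which $\{E^s=a\}$ has density $\ge 1-1/k$, and push forward by $f^{n_k}$ using that $f^{n_k}$ maps this cylinder onto $[0;j_{n_k}]$ measure-preservingly. The cocycle relation then gives a single value $b_k=A(j_{n_k-1})\cdots A(j_0)a$ of density $\ge 1-1/k$ in $[0;j_{n_k}]$; passing to a subsequence one finds $b$ with full density in some $[0;j]$, and one more iterate gives full measure in $M$. You should replace your permutation argument by this density-point reasoning (or, equivalently, by a genuine cohomology-triviality argument for the finite-group-valued cocycle $i_0\mapsto\sigma_{i_0}$ over the mixing shift).
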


\begin{proof}
By Proposition~\ref{p.1}, the skew-product $m=\mu\ltimes\eta$ is $\pF$-invariant.
So, we may use the ergodic theorem to conclude that
$$
\tilde\Phi\big(x,\pv\big)
= \lim_n \frac 1n \sum_{j=0}^{n-1} \Phi\big(\pF^j(x,\pv)\big)
= \lim_n \frac 1n \log \|A^n(x)\vv\|
$$
exists for $m$-almost every point, is constant on the orbits of $\pF$ and satisfies
$\int \tilde\Phi \, dm = \int \Phi \, dm$. Thus, \eqref{eq.contradiction}
implies that $\tilde\Phi<\lambda_+(A,P)$ on some subset with positive measure for $m$.

\begin{lemma}\label{l.Lexists1}
For $\mu$-almost every $x\in M$ there exists a unique $L_x\in\PP(\RR^2)$ such that
$\tilde\Phi(x,L_x)<\lambda_+(A,P)$. Moreover,
\begin{equation}\label{eq.Lualpha}
L_{f(x)}=A(x)L_x \quad\text{for $\mu$-almost every $x\in M$.}
\end{equation}
\end{lemma}

\begin{proof}
Consider $\cZ=\{(x,\pv)\in M \times \PP(\RR^2): \tilde\Phi(x,\pv) < \lambda_+(A,P)\}$.
This is a measurable, $\pF$-invariant set with positive measure for $m$.
Hence, the projection $Z=\{x\in M:(x,\pv)\in\cZ \text{ for some } \pv\}$ is a measurable
$f$-invariant set with positive measure for $\mu$.
Since $(f,\mu)$ is ergodic, it follows that $\mu(Z)=1$. This proves existence.
Next, given any $x\in M$, suppose that there exist two distinct points $\pv_1$ and $\pv_2$ in
projective space such that $\tilde\Phi(x,\pv_i)<\lambda_+(A,P)$ for $i=1, 2$.
Since every vector in $\RR^2$ may be written as linear combination of $\vv_1$ and $\vv_2$,
it follows that
$$
\lim_n \frac 1n \log \|A^n(x)\| < \lambda_+(A,P).
$$
By the definition of $\lambda_+(A,P)$, this can only happen on a subset with zero measure for $\mu$.
That proves uniqueness.
Property \eqref{eq.Lualpha} follows, since the function $\tilde\Phi$ is constant on the orbits of $\pF$.
\end{proof}

\begin{lemma}\label{l.Lexists2}
There is $L\in\PP(\RR^2)$ such that $L_x=L$ for $\mu$-almost every $x\in M$.
\end{lemma}

\begin{proof}
Let $Z_*$ be the union over $i\in X$ of the set of all $x\in [0;i] \cap Z$ such that $L_x$ is an atom of $\eta_i$.
Observe that $\mu(Z_*)>0$, because
$$
\sum_{i\in X} \int_{[0;i]} \eta_i\big(\{L_x\}\big) \, d\mu(x)
= m\big(\{(x,L_x):x\in M\}\big) = m\big(\cZ\big) >0.
$$
Given any $x\in Z_*$, consider $i, j \in X$ such that $x\in [0;i,j]$. If $P_{i,j}=0$ then $\mu([0;i,j])=0$.
Otherwise, using \eqref{eq.Lualpha},
$$
p_j \eta_j\big(\{L_{f(x)}\}\big)
= \sum_{s \in X} p_s P_{s,j} \eta_s\big(\{A(s)^{-1}L_{f(x)}\}\big)
\ge 
p_i P_{i,j} \eta_i\big(\{L_x\}\big)>0,
$$
and so $f(x)\in Z_*$. This proves that $Z_*$ is $f$-invariant up to measure zero.
By the ergodicity of $(f,\mu)$, it follows that $\mu(Z_*)=1$.

Since the $\eta_i$ are finite measures, they have at most countably many atoms. In particular, there exists $a\in\PP(\RR^d)$
such that $Z_a=\{x\in Z_*: L_x=a\}$ has positive $\mu$-measure. Then, for any $k\ge 1$, we may find $n_k$ and $j_{k,0}, \dots, j_{k,n_k}$
such that
$$
\frac{\mu\big(Z_a \cap [0;j_{k,0}, \dots, j_{k,n_k}]\big)}{\mu\big([0;j_{k,0}, \dots, j_{k,n_k}]\big)}
\ge 1-\frac 1k.
$$
Let $b_k=A(j_{k,n_k-1}) \cdots A(j_{k,0})a$. Then, using \eqref{eq.Lualpha} once more,
$$
\begin{aligned}
\frac{\mu\big(Z_{b_k} \cap [0;j_{k,n_k}]\big)}{\mu\big([0;j_{k,n_k}]\big)}
& \ge \frac{\mu\big(f^{n_k}(Z_a \cap [0;j_{k,0}, \dots, j_{k,n_k}])\big)}{\mu\big(f^{n_k}([0;j_{k,0}, \dots, j_{k,n_k}])\big)}\\
& = \frac{\mu\big(Z_a \cap [0;j_{k,0}, \dots, j_{k,n_k}]\big)}{\mu\big([0;j_{k,0}, \dots, j_{k,n_k}]\big)}
\ge 1-\frac 1k.
\end{aligned}
$$
In particular, for $k\ge 2$,
$$
\mu(Z_{b_k})\ge (1-k^{-1}) p_{j_{k,n_k}}\ge \min_{i\in X} \frac{p_i}{2}>0.
$$
Clearly, there are only finitely many values of $b_k$ for which this inequality can hold. Then, since $j_{k,n_k}$ also takes values
in a finite set $X$, we may choose a subsequence $k_l\to\infty$ for which the values  of $j_{k_l,n_{k_l}}$ and $b_{k_l}$ remain constant.
In this way we find $j\in X$ and $b\in\PP(\RR^d)$ such that
$$
\frac{\mu\big(Z_b \cap [0;j]\big)}{\mu\big([0;j]\big)} = 1.
$$
Equivalently, $L_x=b$ for $\mu$-almost every $x\in[0;j]$. Then, iterating once more, $L_y=A(j)b$ for $\mu$-almost every $y\in X$
(the image of any full $\mu$-measure subset of $[0;j]$ has full $\mu$-measure in $X$).
This implies that $L_x$ takes a unique value on a full $\mu$-measure set, as claimed.
\end{proof}

We are ready to complete the proof of the proposition.
Claim \eqref{eq.Lexists1} follows from $\mu(Z_{A(j)b})=1$ and \eqref{eq.Lualpha}.
By construction, $\tilde\Phi(x,\pv) = \lambda_+(A,P)$ for $\pv \neq L$ and $\tilde\Phi(x,L)< \lambda_+(A,P)$
for $\mu$-almost all $x\in M$. By the Oseledets theorem, $\tilde\Phi$ only takes the values $\lambda_\pm(A,P)$.
It follows that $\tilde\Phi(x,L) = \lambda_-(A,P)$ for $\mu$-almost all $x\in M$.
This gives claim \eqref{eq.Lexists2}.
\end{proof}

\section{Probabilistic repellers}\label{s.probabilisticrepellers}

For any $\alpha\in\GL(d)$, we denote by $D\alpha(\pv)$ the derivative at a point $\pv\in\PP(\RR^d)$ of the action
of $\alpha$ in the projective space. The tangent space to the projective space at $\pv$ is canonically identified
with the hyperplane $v^\perp\subset\RR^d$ orthogonal to $v$. Then the derivative is given by
\begin{equation}\label{eq.proj_der}
D\alpha(\pv)\dot\pv = \frac{\proj_{\alpha(\pv)} \alpha(\dot\pv)}{\|\alpha(\vv)\|/\|\vv\|}
\quad\text{for every } \dot\pv \in \pv^\perp
\end{equation}
where $\proj_\pw: \vu  \mapsto \vu - \vw (\vu\cdot\vw)/(\vw\cdot\vw)$ denotes the orthogonal projection
to the hyperplane orthogonal to $w$. This implies the following estimate:
\begin{equation}\label{eq.derivative_estimate}
\frac{1}{\|\alpha\| \, \|\alpha^{-1}\|}
\le \frac{\|D\alpha(\pv)\dot v\|}{\|\dot v\|}
\le \|\alpha\| \, \|\alpha^{-1}\|
\quad\text{for every non-zero } \dot v \in v^\perp.
\end{equation}
For $n\ge 1$, we will write $DA^n(x,\pv)$ to mean $D\alpha(\pv)$ with $\alpha=A^n(x)$.

We call $\pv\in\PP(\RR^d)$ an \emph{invariant point} for $A$ if $A(i)\pv=\pv$ for every $i\in X$.
We call $\pv$ a \emph{$P$-expanding point} for $A$ if it is invariant for $A$ and there exist $l\ge 1$
and $c>0$ such that
\begin{equation}\label{eq.expanding_point}
\int_{[0;i]} \log \|DA^l(x,\pv)^{-1}\|^{-1} \, d\mu(x) \ge 4c p_i
\quad\text{for every $i\in X$.}
\end{equation}
Keep in mind that $\|\alpha^{-1}\|^{-1}$ is the \emph{co-norm} of an (invertible) matrix $\alpha$:
$$
\|\alpha^{-1}\|^{-1}
= \inf\big\{\frac{\|\alpha(v)\|}{\|v\|}: v \neq 0\big\}.
$$


\begin{remark}\label{r.atom_invariant}
If $\pv$ is an invariant point for $A$ and $\eta$ is a $P$-stationary vector then
$$
\eta_j(\{\pv\})
= \sum_{i\in X} \frac{p_{i}P_{i,j}}{p_j}\eta_{i}(\{\pv\})
\quad\text{for every $j\in X$.}
$$
Since $\sum_{i\in X} p_iP_{i,j}/p_j=1$ for every $j\in X$, this implies that the value of
$\eta_j(\{\pv\})$ is the same for all $j\in X$. In particular, if $\pv$ is an atom of $\eta$ then
it is an atom of $\eta_j$, with the same weight for every $j\in X$.
\end{remark}

\begin{proposition}\label{p.Lexpands}
Let $\pv\in\PP(\RR^d)$ be an invariant point for $A$ and suppose that there exist $a<b$ such that,
for $\mu$-almost every $x \in M$,
\begin{enumerate}
\item $\lim_n \frac 1n \log \|A^n(x)\vv\| \le a$ for any $\vv\in\pv$
\item $\lim_n \frac 1n \log \|A^n(x)\vw\| \ge b$ for any $\vw \notin \pv$.
\end{enumerate}
Then $\pv$ is a $P$-expanding point for $A$.
\end{proposition}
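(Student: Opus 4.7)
The plan is to exploit the invariance of $\pv$ to put each $A(i)$ in block upper-triangular form and express the projective derivative at the fixed point $\pv$ as an explicit ratio, then reduce the integrated lower bound on each cylinder $[0;i]$ to a Birkhoff-type estimate for the scalar diagonal block and an application of Kingman's theorem plus exponential mixing of $P$ for the transverse block. Concretely, I would fix a unit vector $\vv$ spanning $\pv$ and an orthonormal basis $\{\vw_1,\dots,\vw_{d-1}\}$ of $\pv^\perp$. In the basis $\{\vv,\vw_1,\dots,\vw_{d-1}\}$ the matrix of $A(i)$ has a scalar $\alpha_i\in\RR$ in the $(1,1)$ entry, zero in the $(2,1)$ block, and an invertible $(d-1)\times(d-1)$ matrix $\Gamma_i$ in the $(2,2)$ block, because $A(i)\pv=\pv$. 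Applying \eqref{eq.proj_der} at the fixed point $\pv$, a direct calculation gives
\[
DA^l(x,\pv) = \frac{\Gamma_l(x)}{|\alpha_l(x)|}\quad\text{on }\pv^\perp,
\]
with $\alpha_l(x)=\alpha_{i_{l-1}}\cdots\alpha_{i_0}$ and $\Gamma_l(x)=\Gamma_{i_{l-1}}\cdots\Gamma_{i_0}$, hence
\[
\log\|DA^l(x,\pv)^{-1}\|^{-1} = \log\|\Gamma_l(x)^{-1}\|^{-1} - \log|\alpha_l(x)|.
\]

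Next I would identify the asymptotics of the two terms. Hypothesis (1) forces the Lyapunov exponent $\lambda_\pv := \sum_i p_i\log|\alpha_i|$ of the scalar sub-cocycle on $\pv$ to satisfy $\lambda_\pv\le a$. Since the Oseledets exponents of $A$ other than $\lambda_\pv$ coincide with the Lyapunov exponents of the quotient cocycle $\Gamma$, hypothesis (2) forces all Lyapunov exponents of $\Gamma$ to be $\ge b$. The super-multiplicativity $\|\Gamma_{l+m}(x)^{-1}\|^{-1}\ge \|\Gamma_m(x)^{-1}\|^{-1}\,\|\Gamma_l(f^m x)^{-1}\|^{-1}$ together with Kingman's theorem then yields $\tfrac{1}{l}\int\log\|\Gamma_l^{-1}\|^{-1}\,d\mu \to \lambda^\Gamma_{\min}\ge b$.

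The crucial step is to localize these asymptotics on $[0;i]$. For the scalar part, $\log|\alpha_l(x)|=\sum_{k=0}^{l-1}\log|\alpha_{i_k}|$ is a genuine Birkhoff sum, so aperiodicity of $P$ (giving exponential mixing $|P^k_{i,j}-p_j|\le C\theta^k$) yields
\[
\int_{[0;i]}\log|\alpha_l|\,d\mu = l p_i\lambda_\pv + O(p_i)
\]
uniformly in $i\in X$. For the transverse part, I would choose $k=k(l)$ with $k\to\infty$ and $k/l\to 0$, apply super-additivity to bound $\log\|\Gamma_l(x)^{-1}\|^{-1}\ge \log\|\Gamma_{l-k}(f^k x)^{-1}\|^{-1}+\log\|\Gamma_k(x)^{-1}\|^{-1}$, and then use the Markov decomposition
\[
\int_{[0;i]}\log\|\Gamma_{l-k}(f^k x)^{-1}\|^{-1}\,d\mu(x) = p_i\sum_j P^k_{i,j}\int\log\|\Gamma_{l-k}^{-1}\|^{-1}\,dm_j,
\]
where $m_j$ denotes the Markov measure started at $j$. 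Exponential mixing replaces $P^k_{i,j}$ by $p_j$ up to $O(\theta^k)$, and Kingman gives $\int\log\|\Gamma_{l-k}^{-1}\|^{-1}\,d\mu = (l-k)\lambda^\Gamma_{\min}+o(l)$. Combining,
\[
\int_{[0;i]}\log\|DA^l(x,\pv)^{-1}\|^{-1}\,d\mu \ge p_i l(\lambda^\Gamma_{\min}-\lambda_\pv)-o(p_i l) \ge \tfrac12 p_i l(b-a)
\]
for $l$ large enough (uniformly in the finite alphabet $X$), so taking $c := l(b-a)/8>0$ verifies \eqref{eq.expanding_point}.

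The main obstacle is the transverse factor in general dimension $d\ge 3$: because $\log\|\Gamma_l^{-1}\|^{-1}$ is only super-additive rather than a Birkhoff sum, the conditional expectation on $[0;i]$ does not reduce algebraically to an unconditional integral, and one must couple Kingman's theorem with the exponential mixing of the Markov chain to bridge that gap. In the case $d=2$ that is ultimately used for Theorem~\ref{t.main}, this difficulty evaporates: $\Gamma_l$ reduces to a scalar product, $\log\|\Gamma_l^{-1}\|^{-1}=\sum_k\log|\gamma_{i_k}|$ becomes a genuine Birkhoff sum, and the whole argument collapses to the single Markov mixing estimate used for $\log|\alpha_l|$.
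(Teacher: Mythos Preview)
Your argument is correct, but the paper's route is considerably simpler and sidesteps the ``main obstacle'' you identify. After your block-triangularisation and the identity
\[
\log\|DA^l(x,\pv)^{-1}\|^{-1}=\log\|\Gamma_l(x)^{-1}\|^{-1}-\log|\alpha_l(x)|,
\]
the paper does not try to estimate the integrals over $[0;i]$ directly. Instead it uses the \emph{pointwise} versions of Birkhoff and Kingman (for the ergodic system $(f,\mu)$) to conclude that
\[
\liminf_n \frac{1}{n}\log\|DA^n(x,\pv)^{-1}\|^{-1}\ge b-a\qquad\text{for $\mu$-a.e. }x.
\]
The decisive observation is that, by \eqref{eq.derivative_estimate}, the sequence $\tfrac{1}{n}\log\|DA^n(x,\pv)^{-1}\|^{-1}$ is \emph{uniformly bounded} in $x$ and $n$ (by $\log\max_i\|A(i)\|\|A(i)^{-1}\|$). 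The bounded convergence theorem then gives
\[
\lim_n \frac{1}{n}\int_{[0;i]}\log\|DA^n(x,\pv)^{-1}\|^{-1}\,d\mu(x)\ge (b-a)\,p_i
\]
immediately, for every $i\in X$ and every $d\ge 2$, with no appeal to exponential mixing, no splitting $l=k+(l-k)$, and no distinction between the additive scalar part and the merely super-additive transverse part.

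Your approach does work and has the merit of being more explicit about rates (the dependence on $l$ is tracked through the mixing constant $\theta$), but none of that is needed for the statement. The paper's dominated-convergence trick is both shorter and dimension-free, and it is worth internalising: whenever a pointwise ergodic/subadditive limit exists and the normalised sequence is uniformly bounded, localisation to any fixed positive-measure set is automatic.
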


\begin{proof}
By assumption, $A(i)\pv=\pv$ for every $i\in X$. Let $A^\perp$ be the linear cocycle induced by $A$
on the orthogonal complement $\pv^\perp$, that is,
$$
A^\perp(x) \dot v = \proj_v A(x)\dot v \quad\text{for each $\dot v \in \pv^\perp$.}
$$
The second hypothesis implies (see $[17, \text{Proposition 4.14}]$) that the Lyapunov exponents of $A^\perp$ are
bounded below by $b$. In other words, the smallest Lyapunov exponent
$$
\lim_n \frac 1n \log \|(A^\perp)^n(x)^{-1}\|^{-1} \ge b
$$
for $\mu$-almost every $x \in M$. The expression \eqref{eq.proj_der} implies
$$
\|DA^n(x,\pv)^{-1}\|^{-1} \ge \frac{\|(A^\perp)^n(x)^{-1}\|^{-1}}{\|A^n(x)\vv\|}
$$
for any unit vector $\vv\in\pv$. Then, using the first hypothesis, it follows that
$$
\lim_n \frac 1n \log \|DA^n(x,\pv)^{-1}\|^{-1} \ge b - a > 0
$$
for $\mu$-almost every $x \in M$. It can be easily seen from \eqref{eq.derivative_estimate} that the sequence
on the left-hand side is uniformly bounded. So, we may use the bounded convergence theorem to conclude that
$$
\lim_n \frac 1n \int_{[0;i]} \log \|DA^n(x,\pv)^{-1}\|^{-1} \, d\mu(x) \ge (b - a)p_i
$$
for every $i\in X$. Fix any $c>0$ and then take $l\ge 1$ large enough so that
$$
\int_{[0;i]} \log \|DA^l(x,\pv)^{-1}\|^{-1} \, d\mu(x) \ge l\frac{b - a}{2}p_i\ge 4 c p_i
$$
for every $i\in X$.
\end{proof}

\begin{proposition}\label{p.expanding_delta}
Let $\pv\in\PP(\RR^d)$ be a $P$-expanding point for $A$ and $c>0$ and $l\ge 1$ be as in \eqref{eq.expanding_point}.
Then, assuming that $\delta>0$ is small enough,
$$
\int_{[0;i]} \|DA^l(x,\pv)^{-1}\|^\delta \, d\mu(x) \le (1 - 3c \delta)p_i
\quad\text{for every $i\in X$.}
$$
\end{proposition}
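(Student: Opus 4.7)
The plan is to write $\|DA^l(x,\pv)^{-1}\|^\delta = e^{-\delta \varphi(x)}$ where $\varphi(x) := \log \|DA^l(x,\pv)^{-1}\|^{-1}$, and exploit the fact that the hypothesis \eqref{eq.expanding_point} gives a lower bound on the integral of $\varphi$ over each cylinder $[0;i]$. A Taylor expansion of the exponential around $\delta=0$ converts the conclusion we want into a linear combination of this integral and a uniformly bounded error term, which can then be absorbed by choosing $\delta$ small.

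First I would observe that $\varphi$ is bounded by a constant $M$ depending only on $A$ and $l$, not on $x$. Indeed, the estimate \eqref{eq.derivative_estimate} applied to $\alpha=A^l(x)$ gives $|\varphi(x)| \le \log\bigl(\|A^l(x)\| \, \|A^l(x)^{-1}\|\bigr)$, and since $X$ is finite and $A(i) \in \GL(d)$ for each $i$, we have $\|A^l(x)\| \le (\max_i \|A(i)\|)^l$ and similarly for the inverse, so $|\varphi(x)| \le M := l \log\bigl(\max_i \|A(i)\| \cdot \max_i \|A(i)^{-1}\|\bigr)$ uniformly in $x$.

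Next, for any $\delta$ with $\delta M \le 1$, the elementary inequality $e^{-t} \le 1 - t + t^2$ valid for $|t| \le 1$ (say) yields
\begin{equation*}
\|DA^l(x,\pv)^{-1}\|^\delta = e^{-\delta \varphi(x)} \le 1 - \delta \varphi(x) + \delta^2 \varphi(x)^2 \le 1 - \delta \varphi(x) + \delta^2 M^2.
\end{equation*}
Integrating over $[0;i]$ and invoking the hypothesis $\int_{[0;i]} \varphi \, d\mu \ge 4c p_i$ gives
\begin{equation*}
\int_{[0;i]} \|DA^l(x,\pv)^{-1}\|^\delta \, d\mu(x) \le p_i - 4c \delta p_i + \delta^2 M^2 p_i = p_i\bigl(1 - 4c\delta + \delta^2 M^2\bigr).
\end{equation*}
The proof finishes by choosing $\delta$ small enough that $\delta M^2 \le c$ (and also $\delta M \le 1$), so that the $\delta^2 M^2$ term is dominated by $c\delta$ and the bracket is at most $1 - 3c\delta$, yielding the claim for every $i\in X$.

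There is no real obstacle here: the argument is a standard \emph{large deviation / subadditivity} trick that converts a mean lower bound on $\log$ into a multiplicative contraction estimate at order $\delta$. The only point needing care is the uniform boundedness of $\varphi$, which is automatic because $X$ is a finite alphabet; in more general settings (infinite state space) one would need an $L^\infty$ or exponential-moment assumption on $\log\|A\|$.
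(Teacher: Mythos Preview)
Your proof is correct and follows essentially the same idea as the paper's: both are first-order Taylor arguments at $\delta=0$, using the hypothesis \eqref{eq.expanding_point} for the linear term and the uniform bound on $\log\|DA^l(x,\pv)^{-1}\|$ (coming from the finite alphabet via \eqref{eq.derivative_estimate}) to control the remainder. The paper phrases this as differentiating $\psi_i(\delta)=\int_{[0;i]}\|DA^l(x,\pv)^{-1}\|^\delta\,d\mu$ under the integral sign and using continuity of $\psi_i'$ near $0$, whereas you use the explicit pointwise inequality $e^{-t}\le 1-t+t^2$; the difference is purely cosmetic.
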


\begin{proof}
Fix $i\in X$. The function
$$
\delta\mapsto\psi_i(\delta) = \int_{[0;i]} \|DA^l(x,\pv)^{-1}\|^\delta \, d\mu(x)
$$
is differentiable and the derivative is given by
\begin{equation}\label{eq.der_formula}
\psi'_i(\delta) = \int_{[0;i]} \|DA^l(x,\pv)^{-1}\|^\delta \log \|DA^l(x,\pv)^{-1}\| \, d\mu(x).
\end{equation}
In particular, by the inequality \eqref{eq.expanding_point},
$$
\psi'_i(0) = \int_{[0;i]} \log \|DA^l(x,\pv)^{-1}\| \, d\mu(x)
\le -4cp_i.
$$
Now, the factor $\|DA^l(x,\pv)^{-1}\|^{\delta}$ in \eqref{eq.der_formula} is close to $1$,
uniformly in $x$ and $\dot\pv$, if $\delta$ is close to zero.
It follows that $\psi'_i(\delta) \le - 3cp_i$ for every small $\delta>0$.
Since $\psi_i(0)=p_i$, this gives the claim.
\end{proof}

\section{Proof of Theorem~\ref{t.main}}\label{s.proofmain}

The core of the proof of Theorem~\ref{t.main} is the following statement, whose proof will be given in
Sections~\ref{s.proof1} through~\ref{s.proof3}:

\begin{theorem}\label{t.key}
Suppose that $\pv \in \PP(\RR^d)$ is a $P$-expanding point for $A$.
Let $(A_k,P_k)_k$ be a sequence converging to $(A,P)$ and, for each $k\ge 1$, let $\eta_k$ be a $P_k$-stationary
unit vector. Suppose that $(\eta_k)_k$ converges to some $\eta$ in the weak$^*$ topology.
If $\pv$ is an atom for $\eta$ then $\eta_k$ must be atomic for all large $k$.
\end{theorem}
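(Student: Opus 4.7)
The plan is a contradiction argument exploiting the fact that atoms force singular energies to diverge. Assume, passing to a subsequence, that each $\eta_k$ is non-atomic. The goal is to produce a uniform-in-$k$ upper bound on a truncated local $\delta$-energy of $\eta_k$ at $\pv$, and then observe that the atom of $\eta$ at $\pv$ makes the corresponding non-truncated energy of $\eta$ infinite.

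Take $l\ge 1$ and $c>0$ witnessing that $\pv$ is $P$-expanding for $A$. Since the integrand in Proposition~\ref{p.expanding_delta} depends on $x$ only through the first $l$ coordinates and is continuous in $(A_k,P_k)$, for $\delta>0$ small and $k$ large we may replace $A,\mu,p_i$ by $A_k,\mu_k,p_{k,i}$ at the cost of weakening the constant to $(1-2c\delta)$:
$$
\int_{[0;i]} \|DA_k^l(x,\pv)^{-1}\|^\delta\,d\mu_k \,\le\, (1-2c\delta)\,p_{k,i}\quad\text{for every } i\in X.
$$
Fix a smooth cutoff $\phi:\PP(\RR^2)\to[0,1]$ equal to $1$ on a small neighborhood of $\pv$ and supported in a slightly larger one, and for $\epsilon>0$ set
$$
I_k^\epsilon \,=\, \sum_{j\in X} p_{k,j}\int \phi(v)\,\max\bigl(d(v,\pv),\epsilon\bigr)^{-\delta}\,d\eta_{k,j}(v) \,\le\,\epsilon^{-\delta}.
$$

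Unwinding the iterated stationarity $\eta_k=\cP_k^l\eta_k$, as in the proof of Proposition~\ref{p.1}, gives
$$
I_k^\epsilon \,=\, \int_M \int \phi(A_k^l(x)u)\,\max\bigl(d(A_k^l(x)u,\pv),\epsilon\bigr)^{-\delta}\,d\eta_{k,i_0(x)}(u)\,d\mu_k(x).
$$
Because $A(i)\pv=\pv$ for every $i$ and $X$ is finite, the quantity $\sup_x d(A_k^l(x)\pv,\pv)$ tends to $0$ as $k\to\infty$; linearizing at $\pv$ gives $d(A_k^l(x)u,\pv)\ge(1-o(1))\|DA_k^l(x,\pv)^{-1}\|^{-1}d(u,\pv)$ for $u$ close to $\pv$. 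Splitting the $u$-integral between $u$ near $\pv$ (where the linearization combines with the expansion bound of the previous paragraph) and $u$ bounded away from $\pv$ (where the integrand is bounded by a uniform constant $C_0$, since a projective transformation close to one fixing $\pv$ cannot send a point far from $\pv$ arbitrarily close to it) produces a recursive inequality
$$
I_k^\epsilon \,\le\, (1-c\delta)\,I_k^\epsilon + C_0,
$$
and hence $I_k^\epsilon\le C_0/(c\delta)$ uniformly in $k$ and $\epsilon>0$.

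For each fixed $\epsilon>0$ the integrand $v\mapsto \phi(v)\max(d(v,\pv),\epsilon)^{-\delta}$ is continuous on $\PP(\RR^2)$, so weak$^*$ convergence $\eta_{k,j}\to\eta_j$ together with $p_{k,j}\to p_j$ gives $I^\epsilon := \sum_j p_j\int \phi(v)\max(d(v,\pv),\epsilon)^{-\delta}\,d\eta_j \le C_0/(c\delta)$. Monotone convergence as $\epsilon\downarrow 0$ then sends $I^\epsilon$ to $\sum_j p_j\int \phi(v)d(v,\pv)^{-\delta}\,d\eta_j$, which is $+\infty$ because $\phi(\pv)=1$ and $\eta_j(\{\pv\})>0$ for every $j$ by Remark~\ref{r.atom_invariant}. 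This contradicts the uniform bound. I expect the main obstacle to be closing the recursive inequality cleanly: controlling the linearization error (since $\pv$ is only approximately invariant for $A_k$), ensuring that the cutoff $\phi$ appears on both sides of the recursion rather than being enlarged after one iteration, and verifying that the contribution from $u$ bounded away from $\pv$ is indeed uniformly controlled in $\epsilon$.
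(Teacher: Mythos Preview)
Your proposal has the right spirit (a local $\delta$-energy blowing up at an atom, controlled via stationarity and the expansion estimate of Proposition~\ref{p.expanding_delta}), but the recursive inequality $I_k^\epsilon \le (1-c\delta) I_k^\epsilon + C_0$ with $C_0$ independent of $\epsilon$ does not close. There are two intertwined obstructions. First, on the truncation ball $B_\epsilon=\{d(u,\pv)<\epsilon\}$ the function $u\mapsto\max(d(u,\pv),\epsilon)^{-\delta}$ is the constant $\epsilon^{-\delta}$, and for $u$ deep inside $B_\epsilon$ one has $g_\epsilon(A_k^l(x)u)=\epsilon^{-\delta}=g_\epsilon(u)$ for every $x$; there is no contraction at all. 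Carrying this through, the recursion only yields $c\delta\int_{\{d\ge\epsilon\}}\phi\,d(u,\pv)^{-\delta}\,d\eta_k\le C_0$, which in the limit controls $\int_{\{d>0\}}\phi\,d(u,\pv)^{-\delta}\,d\eta$ but says nothing about the mass $\eta(\{\pv\})$; so no contradiction arises. Second, the linearization $d(A_k^l(x)u,\pv)\ge(1-o(1))\|DA_k^l(x,\pv)^{-1}\|^{-1}d(u,\pv)$ is not valid as stated: since $\pv$ is only $A$-invariant, the triangle inequality gives $d(A_k^l(x)u,\pv)\ge\|DA_k^l(x,\pv)^{-1}\|^{-1}d(u,\pv)-d(A_k^l(x)\pv,\pv)$, and the drift term dominates precisely for $d(u,\pv)$ comparable to $\epsilon_k:=\sup_x d(A_k^l(x)\pv,\pv)$, which forces $\epsilon\gtrsim\epsilon_k$ for the recursion to even hold. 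Non-atomicity of $\eta_k$ gives no a priori control on $\eta_k(B_{\epsilon_k})$, so you cannot bootstrap.

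The paper's proof avoids both problems by working with \emph{self-couplings} of $\eta_k$ rather than the potential at the fixed reference point $\pv$. The energy $E_\delta(\xi)=\int d(u,w)^{-\delta}\,d\xi(u,w)$ compares two moving points, so the mean value inequality $d(A_k^l u,A_k^l w)\ge\|DA_k^l(x,\cdot)^{-1}\|^{-1}d(u,w)$ holds without any drift term. Moreover no $\epsilon$-truncation is needed: non-atomicity guarantees (Lemma~\ref{l.fatatoms}) a self-coupling of $\eta_k$ with finite energy, and one then iterates the coupling operator $\fP_k^l$ to obtain $E_\delta(\xi_k'')\le C+(1-c\delta)E_\delta(\xi_k)$ cleanly. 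The freedom to \emph{choose} the coupling away from the diagonal is exactly what your single-variable integral lacks. A further subtlety that does not arise in your formulation but is essential in the paper: when the atom has small mass $\kappa\le 9/10$, finiteness of $e_\delta(\eta)$ is compatible with the atom, so one must first localize to neighborhoods $V_{k,i}$ on which the relative mass of $\pv$ exceeds $1/2$; this accounts for the elaborate construction in Section~\ref{s.proof3}.
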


The second part of property \eqref{eq.Lexists1} in Proposition~\ref{p.Lexists} means that $L$ is an invariant point for $A$.
In view of property \eqref{eq.Lexists2} in that same proposition, Proposition~\ref{p.Lexpands}
implies that $L$ is also a $P$-expanding point for $A$.
By Theorem~\ref{t.key} applied to this point, $\eta_k$ has some atom for every large $k$.
Then, by Proposition~\ref{p.5}, there exists a finite set $\cL_k\subset\PP(\RR^2)$
such that $A_k(i)\pv\in\cL_k$ and $\eta_{k,i}(\{\pv\})>0$ for every $\pv\in\cL_k$ and every $i\in X$.

We claim that $\#\cL_k\le 2$ if $k$ is large. That can be seen as follows.
The existence of an invariant subspace $L$ implies that no matrix $A(i)$ is elliptic.
If all the $A(i)$ were either parabolic matrices (necessarily with the same eigenspace)
or multiples of the identity, then we would have $\lambda_-(A,P) = \lambda_+(A,P)$,
which is assumed not to be the case.
Thus, there exists $i\in X$ such that $A(i)$ is hyperbolic.
Since the set of hyperbolic matrices is open, it follows that $A_k(i)$ is hyperbolic for all large $k$.
Now it suffices to notice that for a hyperbolic $2 \times 2$ matrix,
any finite invariant set consists of either one or two eigenspaces.

Suppose that $\cL_k$ has a single element $L_k$. Define $\Phi_k:M\times\PP(\RR^2)\to\RR$ by
$\Phi_k(x,\pv)=\log (\|A_k(x)\vv\|/\|\vv\|)$. We claim that
\begin{equation}\label{eq.Lk}
\int \Phi_k(x,L_k) \, d\mu(x) = \lambda_+(A_k, P_k).
\end{equation}
To prove this, let $\zeta_k=(\zeta_{k,i})_{i\in X}$, where $\zeta_{k,i}$ is the Dirac mass at the
point $L_k$, for every $i\in X$. Observe that $\zeta_k$ is $P_k$-stationary, and so,
using the ergodic theorem for $\mu_k \ltimes \zeta_k$,
\begin{equation}\label{eq.fifi}
\int \Phi_k(x,L_k) \, d\mu_k(x) = \int \tilde\Phi_k(x,L_k) \, d\mu_k(x).
\end{equation}
We also have that $\tilde\Phi_k(x,\pv)=\lambda_+(A_k,P_k)$ for $(\mu_k\ltimes\eta_k)$-almost every $(x,\pv)$,
because
\begin{itemize}
\item $\tilde\Phi_k(x,\pv) \le \lambda_+(A_k,P_k)$ for $(\mu_k\ltimes\eta_k)$-almost every $(x,\pv)$;
\item our choice of $\eta_k$ means that $\int \tilde\Phi_k \, d(\mu_k\ltimes\eta_k) = \lambda_+(A_k,P_k)$.
\end{itemize}
Since $L_k$ is an atom for $\eta_k$, it follows that $\tilde\Phi_k(x,L_k) =\lambda_+(A_k,P_k)$
for $\mu_k$-almost every $x\in M$. Together with \eqref{eq.fifi}, this implies \eqref{eq.Lk}.

Up to restricting to a subsequence, we may assume that $(L_k)_k$ converges to some $L_0\in\PP(\RR^2)$.
Then $A(i)L_0=L_0$ for every $i\in X$. Let $\zeta_0=(\zeta_{0,i})_{i\in X}$ where $\zeta_{0,i}$ is the Dirac mass
at $L_0$ for every $i\in X$. Note that $\zeta_0$ is $P$-stationary, and so the ergodic theorem gives that
\begin{equation}\label{eq.fifizero}
\int \Phi(x,L_0) \, d\mu(x) = \int \tilde\Phi(x,L_0) \, d\mu(x).
\end{equation}
Clearly, $(\Phi_k)_k$ converges uniformly to $k$ when $k\to\infty$. So, \eqref{eq.Lk} yields
\begin{equation}\label{eq.Lzero}
\lambda_+(A_k,P_k) \to \int \Phi(x,L_0) \, d\mu(x).
\end{equation}
According to Proposition~\ref{p.Lexists}, there are two possibilities:
\begin{itemize}
\item[(i)] If $L_0 \neq L$ then the right-hand side of \eqref{eq.fifizero} is equal to $\lambda_+(A,P)$.
Then \eqref{eq.Lzero} means that $\lambda_+(A_k,P_k)$ converges to $\lambda_+(A,P)$,
which contradicts \eqref{eq.contradiction}.
\item[(ii)] If $L_0=L$ then the right-hand side of \eqref{eq.fifizero} is equal to $\lambda_-(A,P)$.
Then $\lambda_+(A_k,P_k) \to \lambda_-(A,P)$ and, by Lemma~\ref{l.somaOK},
$\lambda_-(A_k,P_k)\to\lambda_+(A,P)$.
This is a contradiction, as $\lambda_- \le \lambda_+$ and the inequality is strict at $(A,P)$.
\end{itemize}
We have shown that, for $k$ large, $\cL_k$ can not consist of a single point.

Now suppose that $\cL_k$ consists of two points, $L_k$ and $L_k'$. The arguments are similar.
Let $\zeta_k=(\zeta_{k,i})_{i\in X}$ with $\zeta_{k,i} = (\delta_{L_k} + \delta_{L_k'})/2$.
Arguing as we did for \eqref{eq.Lk}, we find that
\begin{equation}\label{eq.Lk2}
\int \frac 12 \big(\Phi_k(x,L_k)+\Phi_k(x,L_k')\big) \, d\mu_k(x)= \lambda_+(A_k,P_k).
\end{equation}
We may assume that $(L_k)_k$ and $(L'_k)_k$ converge to subspaces $L_0$ and $L_0'$, respectively.
Let $\zeta_0=(\zeta_{0,i})_{i\in X}$ with $\zeta_{0,i}=(\delta_{L_0}+\delta_{L_0'})/2$.
Then
\begin{equation}\label{eq.firifi}
\int \frac 12 \big(\Phi(x,L_0) + \Phi(x,L_0')\big) \, d\mu(x)
= \int \frac 12 \big(\tilde\Phi(x,L_0) + \tilde\Phi(x,L_0')\big) \, d\mu(x)
\end{equation}
and, taking the limit in \eqref{eq.Lk2},
$$
\lambda_+(A_k,P_k) \to \int \frac 12 \big(\Phi(x,L_0) + \Phi(x,L_0')\big) \, d\mu(x).
$$
If $L_0$ and $L_0'$ are both different from $L$ then the right-hand side of \eqref{eq.firifi}
is equal to $\lambda_+(A,P)$ and we reach a contradiction just as we did in case (i) of the previous paragraph.
Now suppose that $L_0=L$. We may assume that, for each $k$ large, there exists $i_k\in X$ such that
$A_k(i_k)L_k=L_k'$: otherwise, we could take $\cL_k=\{L_k\}$ instead, and that case has already
been dealt with. Passing to the limit along a convenient subsequence, we find $i_0\in X$ such that
$A(i_0)L_0=L_0'$. By the second part of \eqref{eq.Lexists1}, this implies that $L_0'$ is also equal to $L$.
Then the right-hand side of \eqref{eq.firifi} is equal to $\lambda_-(A,P)$, and we reach a contradiction
just as we did in case (ii) of the previous paragraph.
So, for $k$ large, $\cL_k$ can not consist of two points either.

This reduces the proof of Theorem~\ref{t.main} to proving Theorem~\ref{t.key}.

\section{Couplings and energy}\label{s.proof1}

In this section we prepare the proof of Theorem~\ref{t.key}.
Let $\pv$ be a $P$-expanding point for $A$ and assume that it is an atom for $\eta$.
According to Remark~\ref{r.atom_invariant}, there exists $\kappa>0$ such that $\eta_i(\{\pv\})=\kappa$
for every $i\in X$. Let $U\subset \PP(\RR^d)$ be an open neighborhood of $\pv$ such that
$$
\eta_i\big(\bar U\big) < \frac{10}{9} \kappa \quad\text{for every $i\in X$}.
$$

Fix constants $c>0$, $l\ge 1$ and $\delta>0$ as in Proposition~\ref{p.expanding_delta} and take $k$ to be
large enough that the conclusion of the proposition holds (further conditions will be imposed on $k$ along the way).
Since $A^l$ is continuous and $A_k^l$ converges to $A^l$ (uniformly) when $k\to\infty$, there exists an
open neighborhood $U_1 \subset U$ of $\pv$ such that
\begin{equation}\label{eq.expanding_delta1}
\sup_{\pz\in U_1}\|DA_k^l(x,\pz)^{-1}\|^\delta \le (1 + c \delta)\|DA^l(x,\pv)^{-1}\|^\delta
\end{equation}
for every $x \in M$ and every large $k$. Reducing $U_1$ if necessary, we may also assume that
\begin{equation}\label{eq.U1U}
A_k^l(x)^{-1}(\bar U_1) \subset U
\text{ for every $x\in M$ and every $k$ large.}
\end{equation}
Let $d$ be the distance on $\PP(\RR^d)$ defined by the angle between two directions, normalized in such
a way that the diameter is $1$. By the mean value theorem and \eqref{eq.expanding_delta1},
$$
\begin{aligned}
d(\pu,\pw)^\delta
& \le d(A_k^l(x)\pu,A_k^l(x)\pw)^\delta \, \sup_{z\in U_1} \|DA_k^l(x,\pz)^{-1}\|^\delta\\
& \le d(A_k^l(x)\pu,A_k^l(x)\pw)^\delta \, (1+c\delta) \, \|DA^l(x,\pv)^{-1}\|^\delta
\end{aligned}
$$
for any pair of distinct points $\pu, \pw \in U_1$, any $x\in M$ and any large $k$.
Then,
$$
\begin{aligned}
\int_{[0;i]} d(A_k^l(x)\pu, & A_k^l(x)\pw)^{-\delta} \, d\mu_k(x)\\
& \le (1+c\delta) \, d(\pu,\pw)^{-\delta} \int_{[0;i]} \|DA^l(x,\pv)^{-1}\|^\delta \, d\mu_k(x)\\
& \le (1+2c\delta) \, d(\pu,\pw)^{-\delta} \int_{[0;i]} \|DA^l(x,\pv)^{-1}\|^\delta \, d\mu(x)
\end{aligned}
$$
(the last inequality uses the fact that $\mu_k\to\mu$ and assumes that $k$ is large enough).
Using Proposition~\ref{p.expanding_delta}, we conclude that
\begin{equation}\label{eq.expanding_delta2}
\begin{aligned}
\int_{[0;i]} d(A_k^l(x)\pu,A_k^l(x)\pw)^{-\delta} \, d\mu_k(x)
& \le (1+2c \delta) \,  d(\pu,\pw)^{-\delta} \, (1-3c\delta) \, p_{k,i}\\
& \le (1-c\delta) \, p_{k,i} \, d(\pu,\pw)^{-\delta}
\end{aligned}
\end{equation}
for any $i\in X$, any pair of distinct points $\pu, \pw \in U_1$, and any large $k$.

For any measure $\xi$ on $\PP(\RR^d)^2$ and any $\delta>0$, define the \emph{$\delta$-energy}
of $\xi$ to be
$$
E_\delta(\xi)=\int \Psi \, d\xi,
$$
where
$$
\Psi(\pu,\pw)=\left\{\begin{array}{ll}d(\pu,\pw)^{-\delta} & \text{if } (\pu,\pw) \in U_1\times U_1\\
                                  1 & \text{otherwise.}\end{array}\right.
$$

Let $\pi_s:\PP(\RR^d)\times\PP(\RR^d) \to \PP(\RR^d)$ be the projection on the $s$-th coordinate, for $s=1, 2$.
The \emph{mass} $\|\eta\|$ of a measure $\eta$ on $\PP(\RR^d)$ is defined by $\|\eta\|=\eta(\PP(\RR^d))$.
If $\eta_1$ and $\eta_2$ are measures on $\PP(\RR^d)$ with the same mass, a \emph{coupling} of
$\eta_1$ and $\eta_2$ is a measure $\xi$ on $\PP(\RR^d)\times\PP(\RR^d)$ such that $(\pi_s)_*\xi=\eta_s$ for $s=1, 2$.
Define:
$$
e_\delta(\eta_1,\eta_2)=\inf\{E_\delta(\xi): \xi \text{ a coupling of $\eta_1$ and $\eta_2$}\}.
$$

A \emph{self-coupling} of a measure $\eta$ is a coupling of $\eta_1=\eta$ and $\eta_2=\eta$.
We call a self-coupling \emph{symmetric} if it is invariant under the involution
$\iota: (x,y) \mapsto (y,x)$. Define the \emph{$\delta$-energy} of $\eta$ to be
\begin{equation}\label{eq.edelta2}
e_\delta(\eta)
= e_\delta(\eta,\eta)
= \inf\{E_\delta(\xi): \xi \text{ a self-coupling of $\eta$}\}.
\end{equation}

\begin{remark}\label{r.semicont}
The  function $\xi \mapsto E_\delta(\xi)$ is lower semi-continuous. Indeed, it is not difficult
to find bounded continuous functions $\Psi_n:\PP(\RR^d)\times\PP(\RR^d)\to\RR$
increasing to $\Psi$ as $n\to\infty$. Then, given $(\xi_k)_k \to \xi$ and $\vep>0$, we may
fix $n$ such that $\int\Psi_n\,d\xi \ge \int E_\delta(\xi)-\vep$ (if the energy is infinite,
replace the right-hand side by $\vep^{-1}$). Then, by the definition of weak$^*$ topology,
$$
\liminf_k E_\delta(\xi_k)
\ge \liminf_k \int \Psi_n \, d\xi_k
\ge \int \Psi_n \, d\xi
\ge E_\delta(\xi) - \vep
$$
($\ge \vep^{-1}$ if the energy is infinite). Making $\vep\to 0$ one gets lower semi-continuity.

As a consequence, the infimum in \eqref{eq.edelta2} is always achieved.
Moreover, that remains true if we restrict to symmetric self-couplings: just note that if $\xi$ is a
self-coupling of $\eta$ then $\xi'=(\xi+\iota_*\xi)/2$ is a symmetric self-coupling of $\eta$ and
$E_\delta(\xi)= E_\delta(\xi')$. As a further consequence, the function $\eta \mapsto e_\delta(\eta)$
is lower semi-continuous. Indeed, given $(\eta_k)_k\to\eta$, take self-couplings $(\xi_k)_k$ with
$E_\delta(\xi_k)=e_\delta(\eta_k)$. Up to restricting to a subsequence, $(\xi_k)_k$ converges to
some $\xi$, which is necessarily a self-coupling of $\eta$, and
$$
e_\delta(\eta) \le E_\delta(\xi) \le \liminf_k E_\delta(\xi_k)=\liminf_k e_\delta(\eta_k).
$$
This proves lower semi-continuity.
\end{remark}

\begin{lemma}\label{l.fatatoms}
For any measure $\eta$ in $\PP(\RR^d)$ and any $\delta>0$:
\begin{enumerate}
\item If $\eta(\{\pu\})>\|\eta\|/2$ for some $\pu \in U_1$ then $e_\delta(\eta)=\infty$.
\item If $\eta(\{\pu\})<\|\eta\|/2$ for every $\pu \in \bar U_1$ then $e_\delta(\eta)<\infty$.
\end{enumerate}
\end{lemma}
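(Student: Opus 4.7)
Part (1) is immediate from inclusion--exclusion. If $\pu \in U_1$ has $\eta(\{\pu\}) > \|\eta\|/2$, then for any self-coupling $\xi$ of $\eta$, the two slices $\xi(\{\pu\}\times\PP(\RR^d))$ and $\xi(\PP(\RR^d)\times\{\pu\})$ both equal $\eta(\{\pu\})$ and meet only in $(\pu,\pu)$, so
$$
\xi(\{(\pu,\pu)\}) \ge 2\eta(\{\pu\}) - \|\eta\| > 0.
$$
Since $(\pu,\pu) \in U_1\times U_1$ and $\Psi(\pu,\pu) = d(\pu,\pu)^{-\delta} = +\infty$, every self-coupling has infinite energy, hence $e_\delta(\eta)=\infty$.

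For part (2) my plan is to produce a single self-coupling supported in the closed set
$$
V_\epsilon \;=\; \{(\pu,\pw) : d(\pu,\pw) \ge \epsilon\} \,\cup\, (U_1\times U_1)^c
$$
for some $\epsilon > 0$; then $\Psi \le \epsilon^{-\delta}$ on $\supp\xi$ and $E_\delta(\xi) \le (\epsilon^{-\delta}+1)\|\eta\|$ is finite. The preliminary step is to upgrade the pointwise hypothesis on $\bar U_1$ to the uniform bound
$$
\eta(\bar B(\pu, 2\epsilon)) \le \beta \quad\text{for every } \pu\in\bar U_1,
$$
with some $\beta < \|\eta\|/2$. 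This follows from compactness of $\bar U_1$, the upper semi-continuity of $\pu\mapsto\eta(\bar B(\pu,r))$ for each fixed $r>0$ (via reverse Fatou applied to the indicators of closed balls), and continuity from above of $\eta$: a direct argument by contradiction using a maximizing sequence $\pu_k\in\bar U_1$ shows that $\sup_{\pu\in\bar U_1}\eta(\bar B(\pu,r))$ decreases as $r\to 0$ to $\sup_{\pu\in\bar U_1}\eta(\{\pu\})$, which by hypothesis is strictly less than $\|\eta\|/2$.

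With the uniform bound in hand, existence of a self-coupling supported in the closed set $V_\epsilon$ is delivered by Strassen's measure-theoretic marriage theorem, whose hypothesis is $\eta(A) \le \eta(V_\epsilon(A))$ for every Borel $A\subset\PP(\RR^d)$, where $V_\epsilon(A)=\{\pw:\exists\pu\in A,\ (\pu,\pw)\in V_\epsilon\}$. The case $A\not\subset U_1$ is trivial, since any $\pu\in A\setminus U_1$ pairs with every $\pw$. When $A\subset U_1$, the complement $V_\epsilon(A)^c$ consists of points of $U_1$ within $\epsilon$ of every point of $A$; picking any $\pw_0\in V_\epsilon(A)^c$ (if empty there is nothing to check), both $A$ and $V_\epsilon(A)^c$ sit inside $\bar B(\pw_0,2\epsilon)$, giving $\eta(A)+\eta(V_\epsilon(A)^c)\le 2\beta<\|\eta\|$. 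The main point to be careful about is this Strassen verification, where the factor of $2$ in the neighborhood radius $2\epsilon$ (rather than $\epsilon$) is what reconciles the diameter bound on $A\cup V_\epsilon(A)^c$ with the uniform local mass estimate.
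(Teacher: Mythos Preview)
Your proof of part (1) is correct and is the same argument as the paper's, phrased via inclusion--exclusion rather than complements.

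For part (2) your argument is also correct, but the route differs from the paper's. The paper does not spell out a proof: it simply cites \cite{[LLE]}, Lemma~10.7, and asserts that the same construction produces a self-coupling giving zero weight to a neighborhood of $\{(\pu,\pu):\pu\in\bar U_1\}$. That referenced construction is hands-on (an explicit redistribution of mass via partitions), whereas you invoke Strassen's marriage theorem after reducing to the uniform local bound $\sup_{\pu\in\bar U_1}\eta(\bar B(\pu,2\epsilon))\le\beta<\|\eta\|/2$. Your verification of the Strassen condition is correct: when $A\subset U_1$ and $V_\epsilon(A)^c\neq\emptyset$, both $A$ and $V_\epsilon(A)^c$ lie in a single closed $2\epsilon$-ball centred at any $\pw_0\in V_\epsilon(A)^c\subset U_1\subset\bar U_1$, so each has $\eta$-mass at most $\beta$, giving $\eta(A)+\eta(V_\epsilon(A)^c)\le 2\beta<\|\eta\|$. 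The compactness argument for the uniform bound is also fine. Your approach is shorter and more conceptual, at the cost of importing Strassen as a black box; the paper's referenced approach is self-contained but requires an explicit construction.
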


\begin{proof}
First we prove (1). Take $\pu$ as in the hypothesis. For any self-coupling $\xi$ of $\eta$, we have
$$
\xi(\{\pu\}^c \times \PP(\RR^d)) = \eta(\{\pu\}^c) = \xi(\PP(\RR^d) \times \{\pu\}^c).
$$
Taking the union, $\xi(\{(\pu,\pu)\}^c) \le 2\eta(\{\pu\}^c) < \eta(\PP(\RR^d))=\xi(\PP(\RR^d)^2)$.
This implies that $(\pu,\pu)$ is an atom for $\xi$. Hence $E_\delta(\xi)=\infty$,
as claimed in (1).

A result analogous to (2) was proven in $[17, \text{Lemma 10.7}]$, for a slightly different notion of energy.
Exactly the same arguments can be used in the present situation to construct a self-coupling $\xi$ of
$\eta$ which gives zero weight to a neighborhood of $\{(\pu,\pu):\pu\in\bar U_1\}$ and, hence,
has finite $\delta$-energy.
\end{proof}

Let $\eta=(\eta_i)_{i\in X}$ be a measure vector on $\PP(\RR^d)$ and $\xi=(\xi_i)_{i\in X}$
be a measure vector on $\PP(\RR^d)\times\PP(\RR^d)$. We say that $\xi$ is a (symmetric) self-coupling of $\eta$
if $\xi_i$ is a (symmetric) self-coupling of $\eta_i$ for every $i\in X$. Then we define
$$
E_\delta(\xi) = \sum_{i\in X} p_i E_\delta(\xi_i)
$$
and
$$
e_\delta(\eta)
 = \inf\big\{E_\delta(\xi): \xi \text{ a self-coupling of } \eta\big\}
 = \sum_{i\in X} p_i e_\delta(\eta_i)
$$
(of course, this depends on the probability vector $p$). It follows from Remark~\ref{r.semicont} that the
function $\eta \mapsto e_\delta(\eta)$ is lower semi-continuous in the space of $P$-stationary measure vectors.
Moreover, for $(\eta_k)_k$ as in Theorem~\ref{t.key},
\begin{equation}\label{eq.semicont}
\liminf_k e_\delta(\eta_k)
= \liminf_k \sum_{i\in X} p_{k,i} e_\delta(\eta_{k,i})
\ge \sum_{i\in X} p_i e_\delta(\eta_i)
= e_\delta(\eta).
\end{equation}

\section{The large atom case}\label{s.proof2}

We split the proof of Theorem~\ref{t.key} into two cases, depending on the value of $\kappa=\eta_i(\{\pv\})$.
In this section we treat the case $\kappa>9/10$.

\begin{proposition}\label{p.couplings_decay1}
There exists $C>0$ such that, for every large $k$, given any symmetric self-coupling $\xi_k$ of $\eta_k$,
there exists a symmetric self-coupling $\xi_k''$ of $\eta_k$ satisfying
$$
E_\delta(\xi_k'') \leq C+(1-c\delta) E_\delta(\xi_k).
$$
\end{proposition}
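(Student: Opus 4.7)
The plan is to take $\xi_k'' = (\cP_k^{(2)})^l \xi_k$ (possibly after a small preliminary modification of $\xi_k$), where $\cP_k^{(2)}$ is the natural transfer operator on vectors of measures over $\PP(\RR^d)\times\PP(\RR^d)$ induced by the diagonal action of the cocycle,
$$
(\cP_k^{(2)}\xi)_j = \sum_{i\in X}\frac{p_{k,i}P_{k,i,j}}{p_{k,j}}\,(A_k(i)\times A_k(i))_*\xi_i.
$$
Because $A_k(i)\times A_k(i)$ commutes with the swap involution and $\eta_k$ is $P_k$-stationary, iterating $l$ times preserves the set of symmetric self-couplings of $\eta_k$, so $\xi_k''$ is automatically such a self-coupling.

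Unwinding the pushforward by Fubini,
$$
E_\delta(\xi_k'')=\sum_{i\in X}\int_{[0;i]}\int_{\PP(\RR^d)^2}\Psi(A_k^l(x)\pu,A_k^l(x)\pw)\,d\xi_{k,i}(\pu,\pw)\,d\mu_k(x).
$$
I would split the inner integrand by the location of $(\pu,\pw)$. On $U_1\times U_1$, the inequality $\Psi\le d(\cdot,\cdot)^{-\delta}$ combined with the contraction estimate \eqref{eq.expanding_delta2} (applied after swapping the order of integration so that $d\mu_k(x)$ is innermost) yields the key contribution $(1-c\delta)\,E_\delta(\xi_k)$. On the subset where $(\pu,\pw)\notin U_1\times U_1$ and its image also avoids $U_1\times U_1$, one has $\Psi\le1$ and the contribution is at most $\sum_{i\in X}p_{k,i}=1$.

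The genuinely delicate case is $(\pu,\pw)\notin U_1\times U_1$ with image inside $U_1\times U_1$, which by \eqref{eq.U1U} forces $(\pu,\pw)\in U^2\setminus U_1^2$. The large-atom hypothesis $\kappa>9/10$ gives $\eta_i(\bar U\setminus\{\pv\})\le1-\kappa<1/10$, hence $\eta_{k,i}(\bar U\setminus U_1)<1-\kappa+o(1)$ for large $k$, so the $\xi_{k,i}$-mass of the problem region is uniformly small. What blocks a direct bound is diagonal (or near-diagonal) mass of $\xi_{k,i}$ inside $U\setminus U_1$, whose image remains near the diagonal in $U_1^2$ and can make $\Psi$ arbitrarily large or even infinite. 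To deal with this I would replace $\xi_k$ beforehand by a symmetric, marginals-preserving rearrangement $\tilde\xi_k$ in which the restriction to $(\bar U\setminus U_1)\times(\bar U\setminus U_1)$ is redistributed as a product-type coupling with the same one-dimensional marginals; the large-atom hypothesis caps the total mass moved uniformly in $k$, and the uniform norm bounds on $A_k^l(x)$ then confine the additional energy cost to a fixed additive constant $C$ independent of $k$ and of $\xi_k$. The main technical obstacle, and the most delicate step of the argument, is carrying out this rearrangement explicitly so as to preserve both symmetry and both marginals, while extracting a $k$-uniform bound on the extra energy that it contributes.
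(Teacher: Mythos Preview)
Your overall architecture matches the paper's: first perform a symmetric, marginals-preserving modification of $\xi_k$, then apply the diagonal transfer operator $\fP_k^l$ (the paper's name for your $(\cP_k^{(2)})^l$), and split the resulting energy integral according to whether $(\pu,\pw)\in U_1\times U_1$. The gap is in the modification step, and it has two parts. First, the dangerous source region is all of $(U\times U)\setminus(U_1\times U_1)$, which also contains pairs $(\pu,\pw)\in U_1\times(U\setminus U_1)$ with both coordinates near $\partial U_1$ and hence arbitrarily close to the diagonal; your rearrangement on $(\bar U\setminus U_1)^2$ leaves these untouched. Second, replacing the restriction of $\xi_{k,i}$ to $(\bar U\setminus U_1)^2$ by a product coupling does not move mass away from the diagonal: a product $\nu\times\nu$ carries near-diagonal mass whenever $\nu$ is concentrated, so the uniform derivative bound gives only $d(A_k^l(x)\pu,A_k^l(x)\pw)\asymp d(\pu,\pw)$, and the image energy is of order $\|\nu\|^{-1}\int d(\pu,\pw)^{-\delta}\,d(\nu\times\nu)$, which is not bounded uniformly in $k$ even though $\|\nu\|$ is small.

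The paper's remedy is to introduce nested neighborhoods $U_3\subset\bar U_3\subset U_2\subset\bar U_2\subset U_1$ and kill the mass of $\xi_{k,i}$ on \emph{all of} $U_2^c\times U_2^c$, not on the thin annulus. This is done by crossing that mass with the abundant $U_3\times U_3$ mass: with $\zeta_{k,i}$ the projection of $\xi_{k,i}\mid U_2^c\times U_2^c$ and $\eta_{k,i}^3$ the projection of $\xi_{k,i}\mid U_3\times U_3$, one subtracts $(\|\zeta_{k,i}\|/\|\eta_{k,i}^3\|)(\xi_{k,i}\mid U_3\times U_3)$ and adds $\|\eta_{k,i}^3\|^{-1}(\zeta_{k,i}\times\eta_{k,i}^3+\eta_{k,i}^3\times\zeta_{k,i})$. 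The added terms live on $U_2^c\times U_3\cup U_3\times U_2^c$, which is at fixed positive distance from the diagonal, so their energy is bounded by a constant $C_1$. After this step every pair in $(U_1\times U_1)^c$ that still carries mass lies in $U_1^c\times U_2\cup U_2\times U_1^c$, again uniformly off the diagonal, and now the derivative bound yields a uniform constant $C_2$ as in \eqref{eq.Psi3}. The hypothesis $\kappa>9/10$ is used only to ensure $\|\zeta_{k,i}\|<\|\eta_{k,i}^3\|$, so that the modified $\xi_{k,i}'$ remains a positive measure.
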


The present case of Theorem~\ref{t.key} can be deduced as follows.
Suppose that there exists a subsequence $(k_j)_j\to\infty$ such that $\eta_{k_j}$ has no atoms and, hence,
$e_\delta(\eta_{k_j})$ is finite. Then, Proposition~\ref{p.couplings_decay1} gives that
$$
e_\delta(\eta_{k_j}) \leq C/(c\delta)
\quad\text{for every large $j$.}
$$
By the lower semi-continuity property \eqref{eq.semicont}, it follows that $e_\delta(\eta)\le C /(c\delta)$.
However, by Lemma~\ref{l.fatatoms}, the assumption $\kappa > 9/10$ implies that $e_\delta(\eta)=\infty$.
This contradiction proves that $\eta_k$ does have some atom for every large $k$.

In the remainder of this section we prove Proposition~\ref{p.couplings_decay1}.
Let $\xi_k$ be a self-coupling of $\eta_k$. The construction of the self-coupling $\xi_k''$
will be done in two steps.
Fix open neighborhoods $U_3 \subset \bar U_3 \subset U_2 \subset \bar U_2 \subset U_1$ of the point $\pv$.
The first step is:

\begin{lemma}\label{l.step11}
There exists $C_1>0$ such that, assuming $k$ is sufficiently large, given any self-coupling $\xi_k$ of $\eta_k$
there exists a self-coupling $\xi_k'$ of $\eta_k$ satisfying
\begin{enumerate}
\item[(a)] $\xi_{k,i}'(U_2 ^c \times U_2^c)=0$  for every $i\in X$;
\item[(b)] $E_\delta(\xi_{k,i}') \le E_\delta(\xi_{k,i})+C_1$ for every $i\in X$.
\end{enumerate}
\end{lemma}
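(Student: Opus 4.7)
The plan is to build $\xi'_{k,i}$ from $\xi_{k,i}$ by rerouting the mass sitting on $U_2^c \times U_2^c$ through the smaller inner neighborhood $U_3$ of $\pv$, at a bounded additive cost in $\delta$-energy. The key quantitative input is that, since $\pv$ is an atom of $\eta$ of weight $\kappa > 9/10$ and $\eta_k \to \eta$ weakly, for all large $k$ we have both $\eta_{k,i}(U_3) > 2/3$ and $\eta_{k,i}(U_2^c) < 1/10$; consequently, by a union bound on the marginals,
$$
\xi_{k,i}(U_3 \times U_3) \;\geq\; 2\eta_{k,i}(U_3) - 1 \;>\; \epsilon \;:=\; \xi_{k,i}(U_2^c \times U_2^c),
$$
so there is ample mass inside $U_3 \times U_3$ to draw on.

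Concretely, I would decompose $\xi_{k,i}$ according to the partition of $\PP(\RR^d)^2$ by $U_2 \times U_2$, set $\rho := \pi_1(\xi_{k,i}|_{U_2^c \times U_2^c})$ and $\tau := \pi_2(\xi_{k,i}|_{U_2^c \times U_2^c})$ (measures on $U_2^c$ of mass $\epsilon$), and then choose a diagonal correction $\gamma := c \cdot \xi_{k,i}|_{U_3 \times U_3}$ with $c := \epsilon / \xi_{k,i}(U_3 \times U_3) < 1$, whose marginals I denote $\beta := \pi_1\gamma$ and $\alpha := \pi_2\gamma$. The proposed replacement is
$$
\xi'_{k,i} \;:=\; \xi_{k,i} \;-\; \xi_{k,i}|_{U_2^c \times U_2^c} \;+\; \rho \otimes \tfrac{\alpha}{\epsilon} \;+\; \tfrac{\beta}{\epsilon} \otimes \tau \;-\; \gamma.
$$
A direct marginal computation cancels the four corrections pairwise to yield $\eta_{k,i}$ on each side; nonnegativity follows from $\gamma \leq \xi_{k,i}|_{U_2 \times U_2}$ (thanks to $c \leq 1$); and $\xi'_{k,i}$ vanishes on $U_2^c \times U_2^c$ by construction, proving (a).

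For the energy bound (b), observe that $\Psi \geq 0$, so removing $\xi_{k,i}|_{U_2^c \times U_2^c}$ and $\gamma$ can only decrease $E_\delta$. The two added product measures are supported on $(U_2^c \times U_3) \cup (U_3 \times U_2^c)$, where $d(\pu,\pw) \geq d(\bar U_3, U_2^c) > 0$ because $\bar U_3 \subset U_2$; hence $\Psi$ is bounded there by $K := \max(1, d(\bar U_3, U_2^c)^{-\delta})$. Each added measure has total mass $\epsilon \leq 1$, so $E_\delta(\xi'_{k,i}) - E_\delta(\xi_{k,i}) \leq 2K =: C_1$, uniformly in $k$, $i$, and $\xi_{k,i}$.

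I expect the main technical hurdle to be the simultaneous bookkeeping of both marginal equalities and nonnegativity: the ``off-diagonal'' product form of the two added terms is dictated by the support constraints (first coordinate in $U_2^c$, second in $U_3$, or vice versa), while the choice of the diagonal correction $\gamma$ as a rescaling of $\xi_{k,i}|_{U_3 \times U_3}$ itself is what reduces positivity to the single scalar inequality $c \leq 1$, which in turn is the quantitative content of the hypothesis ``$k$ sufficiently large''.
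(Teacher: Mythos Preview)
Your construction is correct and is essentially the same as the paper's: both remove $\xi_{k,i}|_{U_2^c\times U_2^c}$, subtract a proportional piece of $\xi_{k,i}|_{U_3\times U_3}$, and add the two cross-product terms supported on $(U_2^c\times U_3)\cup(U_3\times U_2^c)$ to restore the marginals. Your version is in fact slightly more careful, distinguishing the two marginals $\rho,\tau$ and $\alpha,\beta$, whereas the paper writes ``projection on either coordinate'' and so tacitly relies on the self-coupling being symmetric.
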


\begin{proof}
For each $i\in X$, let $\zeta_{k,i}$ be the projection of $(\xi_{k,i} \mid U_2^c \times U_2^c)$
and $\eta_{k,i}^3$ be the projection of $(\xi_{k,i} \mid U_3 \times U_3)$, on either coordinate.
Then take
$$
\begin{aligned}
\xi_{k,i}'
= \xi_{k,i}
- (\xi_{k,i} \mid U_2^c \times U_2^c)
& - \frac{\|\zeta_{k,i}\|}{\|\eta_{k,i}^3\|} (\xi_{k,i} \mid U_3 \times U_3)\\
& + \frac{1}{\|\eta_{k,i}^3\|} \big[\zeta_{k,i} \times \eta_{k,i}^3 + \eta_{k,i}^3 \times \zeta_{k,i}\big]
\end{aligned}
$$
Assuming $k$ is sufficiently large, $\eta_{k,i}(U_3) > (8/9) \kappa > 8/10$ for all $i\in X$.
Then, $\|\xi_{k,i} \mid U_3 \times U_3\| > 6/10$. It follows that
$$
\|\eta_{k,i}^3\|
= \|\xi_{k,i} \mid U_3 \times U_3\|
> \frac{6}{10}
\quand
\|\zeta_{k,i}\|
\le \|\eta_{k,i} \mid U_2^c\|
< \frac{2}{10}.
$$
In particular, $\|\zeta_{k,i}\| < \|\eta_{k,i}^3\|$, which ensures that $\xi_{k,i}$ is a positive measure.
Next, notice that the projection of $\xi_{k,i}'$ on either coordinate is equal to
$$
\eta_{k,i} - \zeta_{k,i} - \frac{\|\zeta_{k,i}\|}{\|\eta_{k,i}^3\|} \eta_{k,i}^3
+ \zeta_{k,i} + \frac{\|\zeta_{k,i}\|}{\|\eta_{k,i}^3\|} \eta_{k,i}^3
= \eta_{k,i}.
$$
So, $\xi_{k,i}'$ is indeed a self-coupling of $\eta_{k,i}$. Finally, it is clear that
\begin{equation}\label{eq.xilinha}
E_\delta(\xi_{k,i}') \le E_\delta(\xi_{k,i}) +
\frac{1}{\|\eta_{k,i}^3\|} E_\delta(\zeta_{k,i} \times \eta_{k,i}^3 + \eta_{k,i}^3 \times \zeta_{k,i}).
\end{equation}
The measure $(\zeta_{k,i} \times \eta_{k,i}^3 + \eta_{k,i}^3 \times \zeta_{k,i})$ is concentrated on the set
$(U_2^c \times U_3 \cup U_3 \times U_2^c)$, which is at positive distance from the diagonal.
Thus, its energy is uniformly bounded. We have already seen that $\|\eta_{k,i}^3\|$ is bounded away from zero.
Thus, the last term in \eqref{eq.xilinha} is uniformly bounded, and that yields the claim in the lemma.
\end{proof}

Let $\cP_k$ denote the operator defined as in \eqref{eq.operator1}, with $(A,P)$ replaced with $(A_k,P_k)$.
The second, and final step of our construction uses the \emph{diagonal action} $\fP_k$ of $\cP_k$,
defined as follows:
$$
(\fP_k\xi)_j(D_1\times D_2) = \sum_{i\in X} \frac{p_{k,i} P_{k,i,j}}{p_{k,j}}\xi_i\big(A_k(i)^{-1}(D_1)\times A_k(i)^{-1}(D_2)\big)
$$
for any $j\in X$ and any measurable $D\subset\PP(\RR^d)$. Equivalently,
\begin{equation}\label{eq.operator2bis}
p_{k,j} \int \Phi \, d(\fP_k\xi)_j
= \sum_{i \in X} \int_{[0;i,j]} \int \Phi\big(A_k(x)\pu, A_k(x)\pw\big) \, d\xi_i(\pu,\pw) \, d\mu_k(x)
\end{equation}
for any measurable function $\Phi:\PP(\RR^d)\times\PP(\RR^d)\to[0,\infty)$ (the two sides
of the equality may be infinite). It is clear that
\begin{equation}\label{eq.operator_tris}
(\pi_s)_* \circ \fP_k = \cP_k \circ (\pi_s)_*
\quad\text{for $s=1, 2$.}
\end{equation}
Take $\xi_k'' = \fP_k^l \xi_k'$. Notice that $\xi''_k$ is a symmetric measure and a self-coupling of $\eta_k$:
the previous relation gives that
$$
(\pi_s)_* \xi''_k
= \cP_k^l ((\pi_s)_* \xi'_k)
= \cP_k^l \eta_k = \eta_k
\quad\text{for $s=1, 2$.}
$$

\begin{lemma}\label{l.step12}
There exists $C_2>0$ such that, assuming $k$ is sufficiently large,
$$
E_\delta(\xi_k'') \le (1-c\delta) E_\delta(\xi_k') + C_2.
$$
\end{lemma}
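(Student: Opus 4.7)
The plan is to unfold the iterated diagonal operator. By induction on $l$, starting from \eqref{eq.operator2bis}, one checks that for any measurable $\Phi\ge 0$,
$$
\sum_j p_{k,j} \int \Phi\, d(\fP_k^l\xi)_j
= \sum_{i\in X} \int_{[0;i]} \int \Phi\big(A_k^l(x)\pu, A_k^l(x)\pw\big)\, d\xi_i(\pu,\pw)\, d\mu_k(x),
$$
because cylinders $[0;i,i_1,\dots,i_{l-1},j]$ with $i_1,\dots,i_{l-1},j$ varying partition $[0;i]$. Applying this to $\Phi=\Psi$ and $\xi=\xi_k'$, and swapping the order of integration by Fubini,
$$
E_\delta(\xi_k'')
= \sum_{i\in X} \int \Big[\int_{[0;i]} \Psi\big(A_k^l(x)\pu, A_k^l(x)\pw\big)\, d\mu_k(x)\Big]\, d\xi_{k,i}'(\pu,\pw).
$$

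My next step is to split the $\xi_{k,i}'$-integration into the region $U_1\times U_1$, where the expansion estimate applies, and its complement. I may assume $E_\delta(\xi_k')<\infty$, for otherwise the claim is trivial; this forces $\xi_{k,i}'$ to give no mass to the diagonal inside $U_1\times U_1$. Using the pointwise bound $\Psi\le d^{-\delta}$ (valid because the normalized distance satisfies $d\le 1$) and then \eqref{eq.expanding_delta2}, I get for $\xi_{k,i}'$-almost every $(\pu,\pw)\in U_1\times U_1$,
$$
\int_{[0;i]} \Psi\big(A_k^l(x)\pu, A_k^l(x)\pw\big)\, d\mu_k(x)
\le (1-c\delta)\, p_{k,i}\, d(\pu,\pw)^{-\delta}
= (1-c\delta)\, p_{k,i}\, \Psi(\pu,\pw),
$$
so the total contribution from $U_1\times U_1$ is bounded by $(1-c\delta)\sum_i p_{k,i} E_\delta(\xi_{k,i}')=(1-c\delta)E_\delta(\xi_k')$.

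For the complement $(U_1\times U_1)^c$, I use condition (a) of Lemma~\ref{l.step11} together with $\bar U_2\subset U_1$: if $(\pu,\pw)$ lies in the support of $\xi_{k,i}'$ but not in $U_1\times U_1$, then exactly one of $\pu,\pw$ is in $U_2$ and the other lies in $U_1^c$. Consequently $d(\pu,\pw)\ge \rho:=d(\bar U_2,U_1^c)>0$. Since $A_k\to A$ in $\GL(d)$, the quantity $\|A_k^l(x)\|\,\|A_k^l(x)^{-1}\|$ is bounded uniformly in $x\in M$ and in large $k$ by some constant $K$; the estimate \eqref{eq.derivative_estimate} (via the mean value theorem) then yields
$$
\Psi\big(A_k^l(x)\pu, A_k^l(x)\pw\big)
\le d\big(A_k^l(x)\pu, A_k^l(x)\pw\big)^{-\delta}
\le (K/\rho)^\delta,
$$
so the inner integral is at most $(K/\rho)^\delta p_{k,i}$ and the total contribution from $(U_1\times U_1)^c$ is bounded by $C_2:=(K/\rho)^\delta$, since the total $\xi_{k,i}'$-mass is $1$.

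The substantive content is very light: the whole argument reduces to plugging \eqref{eq.expanding_delta2} into the unfolded formula and controlling the residual region. The only point that requires care is confirming that the support condition (a) actually pushes the residual region quantitatively away from the diagonal — this is the role of the nested neighbourhoods $U_3\subset\bar U_3\subset U_2\subset \bar U_2\subset U_1$ fixed before the statement, ensuring that $\rho=d(\bar U_2,U_1^c)>0$ and making both error terms uniformly controlled in $k$.
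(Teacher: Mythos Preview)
Your proof is correct and follows essentially the same approach as the paper: unfold $\fP_k^l$ to obtain the integral formula for $E_\delta(\xi_k'')$, split according to whether $(\pu,\pw)\in U_1\times U_1$, apply \eqref{eq.expanding_delta2} on the inner region, and use Lemma~\ref{l.step11}(a) together with the nesting $\bar U_2\subset U_1$ to keep the residual region uniformly away from the diagonal. You are in fact slightly more explicit than the paper in two places: you note the harmless reduction to $E_\delta(\xi_k')<\infty$ to dispose of the diagonal in $U_1\times U_1$, and you spell out via \eqref{eq.derivative_estimate} why a lower bound on $d(\pu,\pw)$ transfers to a lower bound on $d(A_k^l(x)\pu,A_k^l(x)\pw)$ uniformly in $x$ and large $k$.
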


\begin{proof}
From the definition of $\fP_k$ in \eqref{eq.operator2bis} we get that
$$
p_{k,j} \int \Psi \, d\xi_{k,j}''\\
 = \sum_{i\in X} \int_{[0;i]\cap [l;j]} \int \Psi\big(A_k^l(x)\pu,A_k^l(x)\pw\big) \, d\xi_{k,i}' \big(\pu,\pw\big) \, d\mu_k\big(x\big).
$$
for every $j\in X$. Thus, adding over $j$,
\begin{equation}\label{eq.Psi1}
E_\delta(\xi''_k)
 = \sum_{i\in X} \int_{[0;i]} \int \Psi\big(A_k^l(x)\pu,A_k^l(x)\pw\big) \, d\xi_{k,i}' \big(\pu,\pw\big) \, d\mu_k\big(x\big).
\end{equation}
If $(\pu,\pw)\in (U_1\times U_1)$,  then $\Psi(\pu,\pw)=d(\pu,\pw)^{-\delta}$.
Using \eqref{eq.expanding_delta2}, we get that
\begin{equation}\label{eq.Psi2}
\begin{aligned}
\int_{[0;i]} \Psi(A_k^l(x)\pu,A_k^l(x)\pw)\, d\mu_k(x)
& \le \int_{[0;i]} d(A_k^l(x)\pu,A_k^l(x)\pw)^{-\delta} \, d\mu_k(x)\\
& \le (1-c \delta) \, p_{k,i} \, \Psi(\pu,\pw).
\end{aligned}
\end{equation}
Now suppose that $(\pu,\pw)\notin (U_1\times U_1)$. By the property in Lemma~\ref{l.step11}(a),
we only need to consider $(\pu,\pw)\notin (U_2^c\times U_2^c)$.
Then $(\pu,\pw) \in U_2 \times U_1^c \cup U_1^c \times U_2$. Since the latter set is at positive
distance from the diagonal, it follows that $d(\pu,\pw)$ is uniformly bounded from below.
Hence, there exists $C_2>0$ such that
\begin{equation}\label{eq.Psi3}
\Psi(A_k^l(x)\pu,A_k^l(x)\pw) \le d(A_k^l(x)\pu,A_k^l(x)\pw)^{-\delta}
\le C_2.
\end{equation}
for every $x\in M$. Using \eqref{eq.Psi2} and \eqref{eq.Psi3}, together with Fubini, we get that
\eqref{eq.Psi1} yields
$$
E_\delta(\xi''_k)
 \le \sum_{i\in X} \int_{I_1} (1-c\delta) p_{k,i} \Psi(\pu,\pw) \, d\xi'_{k,i}(\pu,\pw)
                 + \int_{I_1^c} C_2 p_{k,i} \, d\xi'_{k,i}(\pu,\pw)
$$
where $I_1=\{(\pu,\pw)\in U_1 \times U_1\}$. Then,
$$
E_\delta(\xi''_k)
 \le (1-c\delta) \sum_{i\in X} p_{k,i} E_\delta(\xi'_{k,i}) + C_2 p_{k,i}
 = (1-c\delta) E_\delta(\xi'_k) + C_2
$$
as claimed.
\end{proof}

Combining Lemmas~\ref{l.step11} and~\ref{l.step12}, we find that
$$
E_\delta(\xi_k'') \le (1-c\delta) E_\delta(\xi_k) + C
\quad\text{with } C = (1-c\delta) C_1 + C_2.
$$
This finishes the proof of Proposition~\ref{p.couplings_decay1}.

\section{The small atom case}\label{s.proof3}

We are left to consider $\kappa \le 9/10$. This is similar to the previous case, but the analysis
must be localized on suitable neighborhoods of $\pv$, inside which the relative weight of the atom
is close to $1$, and the precise choice of such neighborhoods turns out to be much more delicate
than one would anticipate. We do as follows.

Since $\kappa \le 9/10$, we have $\eta_{k,i}(\bar U) < (10/9)\kappa \le 1$ for every $i\in X$ and every large $k$.
Suppose that $\eta_k$ is non-atomic for arbitrarily large values of $k$. Up to restricting to a subsequence,
we may suppose that this is the case for every $k$. Then every $\eta_{k,i}$ is a continuous measure and so
we may find $V_{k,i}\supset\bar U$ such that $\eta_{k,i}(V_{k,i}) =  (10/9)\kappa$. We denote
$$
V_k=(V_{k,i})_{i\in X}
\quand
\eta_k \mid V_k = (\eta_{k,i} \mid V_{k,i})_{i\in X}
\quad\text{for each $k\ge 1$.}
$$
The construction we have just described is designed to get the following fact, which will be needed in a while:

\begin{lemma}\label{l.inout}
$\cP^l_k(\eta_k \mid V_k^c)_j(V_{k,j}) = \cP^l_k(\eta_k \mid V_k)_j(V_{k,j}^c)$ for every $j\in X$.
\end{lemma}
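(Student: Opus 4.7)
The plan is to argue this by a direct conservation-of-mass computation. Since $\eta_k$ is $P_k$-stationary, $\cP_k^l\eta_k=\eta_k$, and since $\cP_k^l$ is obviously linear on measure vectors, the decomposition $\eta_k=(\eta_k\mid V_k)+(\eta_k\mid V_k^c)$ gives
$$
\eta_{k,j}=\cP_k^l(\eta_k\mid V_k)_j+\cP_k^l(\eta_k\mid V_k^c)_j.
$$
Evaluating at $V_{k,j}$ and using the defining property $\eta_{k,j}(V_{k,j})=(10/9)\kappa$ of the sets $V_{k,j}$ yields
$$
(10/9)\kappa=\cP_k^l(\eta_k\mid V_k)_j(V_{k,j})+\cP_k^l(\eta_k\mid V_k^c)_j(V_{k,j}).
$$

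The key remaining point is to keep track of total masses. If $\zeta=(\zeta_i)_{i\in X}$ is a measure vector all of whose components have the same total mass $m$, then from the definition of $\cP_k$ and the $P_k$-stationarity of $p_k$,
$$
(\cP_k\zeta)_j(\PP(\RR^d))=\sum_{i\in X}\frac{p_{k,i}P_{k,i,j}}{p_{k,j}}\,m=m,
$$
so the same property is preserved under iteration by $\cP_k^l$. Applied to $\eta_k\mid V_k$, whose $i$-th component has mass exactly $(10/9)\kappa$ for every $i$ by construction, this shows that $\cP_k^l(\eta_k\mid V_k)_j$ has total mass $(10/9)\kappa$ as well.

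Combining these two observations:
$$
\cP_k^l(\eta_k\mid V_k)_j(V_{k,j}^c)=(10/9)\kappa-\cP_k^l(\eta_k\mid V_k)_j(V_{k,j})=\cP_k^l(\eta_k\mid V_k^c)_j(V_{k,j}),
$$
which is the stated identity. There is no real obstacle here: the whole content of the lemma is that $V_{k,j}$ was chosen precisely so that the mass $\eta_{k,j}(V_{k,j})$ matches the uniform mass $(10/9)\kappa$ carried by every component of $\eta_k\mid V_k$, after which the claim falls out of $\cP_k^l$-invariance of $\eta_k$ and linearity.
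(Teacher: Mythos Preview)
Your proof is correct and follows essentially the same route as the paper: both arguments use $P_k$-stationarity of $\eta_k$ to split $\eta_{k,j}(V_{k,j})$, and both rely on the key fact that the uniform mass $\eta_{k,i}(V_{k,i})=(10/9)\kappa$ is preserved by $\cP_k^l$ because $\sum_i p_{k,i}P_{k,i,j}/p_{k,j}=1$. The only cosmetic difference is that the paper expands $\cP_k^l(\eta_k\mid V_k)_j(\PP(\RR^d))$ directly via the integral formula over $[0;i]\cap[l;j]$, whereas you phrase the mass preservation as a one-step property of $\cP_k$ and iterate.
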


\begin{proof}
Clearly,
\begin{equation}\label{eq.inout1}
\begin{aligned}
\cP_k^l(\eta_k\mid V_k)_j(\PP(\RR^2))
& = \cP^l_k(\eta_k \mid V_k)_j(V_{k,j}) + \cP^l_k(\eta_k \mid V_k)_j(V_{k,j}^c)\\
\eta_{k,j}(V_{k,j})
  = (\cP^l_k \eta_k)_j(V_{k,j})
& = \cP^l_k(\eta_k \mid V_k)_j(V_{k,j}) + \cP^l_k(\eta_k \mid V_k^c)_j(V_{k,j}).
\end{aligned}
\end{equation}
Now, the definition of $\cP_k$ in \eqref{eq.operator1} gives that
$$
\begin{aligned}
\cP_k^l(\eta_k\mid V_k)_j(\PP(\RR^2))
& = \sum_{i\in X} \frac{1}{p_{k,j}} \int_{[0;i]\cap[l;j]} \eta_{k,i}\big(A_k^l(x)^{-1}(\PP(\RR^2)) \cap V_{k,i}\big) \, d\mu_k(x)\\
& = \sum_{i\in X} \frac{1}{p_{k,j}} \int_{[0;i]\cap[l;j]} \eta_{k,i}\big(V_{k,i}\big) \, d\mu_k(x).
\end{aligned}
$$
The key observation is that, by construction, $\eta_{k,i}(V_{k,i})$ is independent of $i$ and $k$.
Thus the previous equality may be rewritten as
\begin{equation}\label{eq.inout2}
\begin{aligned}
\cP_k^l(\eta_k\mid V_k)_j(\PP(\RR^2))
& = \sum_{i\in X} \frac{1}{p_{k,j}} \int_{[0;i]\cap[l;j]} \eta_{k,j}\big(V_{k,j}\big) \, d\mu_k(x)\\
& = \frac{1}{p_{k,j}} \int_{[l;j]} \eta_{k,j}\big(V_{k,j}\big) \, d\mu_k(x)
= \eta_{k,j}(V_{k,j}).
\end{aligned}
\end{equation}
The claim is a direct consequence of \eqref{eq.inout1} and \eqref{eq.inout2}.
\end{proof}

We are going to prove the following localized version of Proposition~\ref{p.couplings_decay1}:

\begin{proposition}\label{p.couplings_decay2}
There exists $C>0$ such that, for every large $k$, given any symmetric self-coupling $\xi_k$ of $\eta_k \mid V_k$,
there exists a symmetric self-coupling $\xi''_k$ of $\eta_k \mid V_k$ satisfying
$$
E_\delta(\xi''_k) \leq C+(1-c\delta) E_\delta(\xi_k).
$$
\end{proposition}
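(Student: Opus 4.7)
The plan is to mimic the two-step structure of the proof of Proposition~\ref{p.couplings_decay1}, carried out entirely inside the window $V_k$, and to use Lemma~\ref{l.inout} to repair the marginals after applying $\fP_k^l$. Given a symmetric self-coupling $\xi_k$ of $\eta_k \mid V_k$ (so $\xi_{k,i}$ is supported in $V_{k,i} \times V_{k,i}$), I will first produce an intermediate $\xi_k'$ with $\xi_{k,i}'(U_2^c \times U_2^c)=0$ at energy cost $+C_1$, and then define $\xi_k''$ from $\fP_k^l \xi_k'$ by restricting to $V_k \times V_k$ and adding a marginal correction, so that the $(1-c\delta)$ contraction of \eqref{eq.expanding_delta2} survives up to an additive constant.

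Step~1 is the same mass swap as in Lemma~\ref{l.step11}: inside each $\xi_{k,i}$ I exchange the diagonal mass in $U_3\times U_3$ with the off-diagonal mass in $U_2^c\times U_2^c$. Because $U_2 \subset U_1 \subset U \subset \bar U \subset V_{k,i}$, the bounds
$$
\|\zeta_{k,i}\| \le \eta_{k,i}(V_{k,i}) - \eta_{k,i}(U_2) \le \tfrac{1}{9}\kappa + \vep
\quand
\|\eta_{k,i}^3\| \ge 2\eta_{k,i}(U_3) - \tfrac{10}{9}\kappa \ge \tfrac{8}{9}\kappa - 2\vep
$$
guarantee positivity of the swap for large $k$, using only $\eta_{k,i}(V_{k,i}) = (10/9)\kappa$ and $\liminf_k \eta_{k,i}(U_s) \ge \eta_i(U_s) \ge \kappa$ for $s=2,3$. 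The swapped mass pairs points in $U_3$ with points in $U_2^c$, hence lies at positive distance from the diagonal and contributes at most a constant $C_1$.

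For Step~2, set $\tilde\xi_k = (\fP_k^l \xi_k') \mid (V_k \times V_k)$; by \eqref{eq.operator_tris} its marginal is $\cP_k^l(\eta_k\mid V_k)_j \mid V_{k,j}$. Since $\eta_{k,j} = \cP_k^l(\eta_k\mid V_k)_j + \cP_k^l(\eta_k \mid V_k^c)_j$, the marginal shortfall with respect to $\eta_{k,j}\mid V_{k,j}$ is
$$
\beta_{k,j} = \cP_k^l(\eta_k \mid V_k^c)_j \mid V_{k,j},
$$
whose total mass, by the identity proving Lemma~\ref{l.inout}, is uniformly at most $1-(10/9)\kappa$. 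I then define
$$
\xi_{k,j}'' = \tilde\xi_{k,j} + \|\beta_{k,j}\|^{-1}\, \beta_{k,j} \times \beta_{k,j}
$$
(with the second term interpreted as $0$ when $\beta_{k,j}=0$), which is a symmetric measure whose marginals are $\eta_{k,j}\mid V_{k,j}$. The estimate for $\tilde\xi_k$ is identical to that in Lemma~\ref{l.step12}: the contraction \eqref{eq.expanding_delta2} applies on $U_1\times U_1$ while the integrand is uniformly bounded elsewhere, yielding $E_\delta(\tilde\xi_k) \le (1-c\delta) E_\delta(\xi_k') + C_2$.

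The key new ingredient, and the reason for the delicate choice of $V_k$, is that \eqref{eq.U1U} forces $A_k^l(x)^{-1}(\bar U_1) \subset U \subset V_{k,i}$ for every $x$ and every large $k$, hence $\cP_k^l(\eta_k \mid V_k^c)_j(\bar U_1) = 0$ and $\beta_{k,j}$ is supported in $U_1^c$. Since $\Psi \equiv 1$ on $U_1^c \times U_1^c$, the correction contributes
$$
E_\delta\bigl(\|\beta_{k,j}\|^{-1} \beta_{k,j} \times \beta_{k,j}\bigr) = \|\beta_{k,j}\| \le 1 - \tfrac{10}{9}\kappa,
$$
a uniform bound. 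Combining the three estimates gives the conclusion with $C=(1-c\delta)C_1 + C_2 + 1$. The part I expect to be the main obstacle is orchestrating the choice of $V_k$: it must be large enough for Step~1 to have room to maneuver, must contain $\bar U$ so that \eqref{eq.U1U} forces $\beta_{k,j}$ away from $\pv$, and must satisfy the inflow--outflow identity of Lemma~\ref{l.inout} so that the correction has the right mass. This is exactly what the prescription $\eta_{k,i}(V_{k,i}) = (10/9)\kappa$ uniformly in $i$ achieves.
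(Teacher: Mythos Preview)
There is a genuine gap in Step~2. You claim that the marginal of $\tilde\xi_k = (\fP_k^l \xi_k') \mid (V_k \times V_k)$ is $\cP_k^l(\eta_k\mid V_k)_j \mid V_{k,j}$, but this is false: projecting after restricting to $V_k\times V_k$ is \emph{not} the same as restricting the projection to $V_k$. Writing $\hat\xi_k = \fP_k^l\xi_k'$, one has
\[
\big(\pi_{1*}\hat\xi_{k,j}\big)\mid V_{k,j}
= \pi_{1*}\big(\hat\xi_{k,j}\mid V_{k,j}\times V_{k,j}\big)
+ \pi_{1*}\big(\hat\xi_{k,j}\mid V_{k,j}\times V_{k,j}^c\big),
\]
so the true shortfall of the marginal of $\tilde\xi_{k,j}$ with respect to $\eta_{k,j}\mid V_{k,j}$ is $I_{k,j}+O_{k,j}$, not just your $\beta_{k,j}=I_{k,j}$. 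Your $\xi''_k$ therefore has marginals $(\eta_k\mid V_k)-O_k$ and is not a self-coupling of $\eta_k\mid V_k$.

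This cannot be fixed by simply enlarging the correction to $\|I_k+O_k\|^{-1}(I_k+O_k)\times(I_k+O_k)$: while $I_{k,j}$ is supported in $U_1^c$ (by \eqref{eq.U1U}), the measure $O_{k,j}$ is supported in $V_{k,j}$ with no lower bound on its distance to $\pv$, so the product $(O_k\times O_k)$ sits on the diagonal inside $U_1\times U_1$ and has no a~priori energy bound. The paper handles exactly this by splitting $O_k$ into its $U_2$ and $U_2^c$ parts: the piece $O_k\mid U_2$ (near $\pv$) is coupled with $I_k$ (in $U_1^c$), while $O_k\mid U_2^c$ together with the leftover fraction of $I_k$ is coupled with a block $\eta_k^3$ extracted from the diagonal mass in $U_3\times U_3$; see Lemmas~\ref{l.couplings}--\ref{l.step23}. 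The mass-balance identity $\|O_{k,j}\|\le\|I_{k,j}\|$ needed for this to work is precisely Lemma~\ref{l.inout}, which you cite but do not actually use.
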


The present case of Theorem~\ref{t.key} may be deduced as follows. Since the $\eta_k$ have no atoms,
the energy $e_\delta(\eta_k \mid V_k)$ is finite. Then, Proposition~\ref{p.couplings_decay2} gives that
$$
e_\delta(\eta_k \mid V_k) \leq C/(c\delta)
\quad\text{for every large $j$.}
$$
Let $\hat\eta=(\hat\eta_i)_{i\in X}$ be any accumulation point of $(\eta_k \mid V_k)$.
Then $e_\delta(\hat\eta)\le C /(c\delta)$, by the lower semi-continuity property \eqref{eq.semicont}.
For any small open neighborhood $W$ of $L$ and any $i\in X$,
$$
\hat\eta_i(\bar W)
\ge \limsup_k \eta_{k,i}(V_{k,i} \cap \bar W)
= \limsup_k \eta_{k,i}(\bar W)
\ge \liminf_k \eta_{k,i}(W)
\ge \eta_i(W)
$$
because $\eta_{k,i}\mid V_{k,i}$ converges to $\hat\eta_i$ and $\eta_{k,i}$ converges to $\eta_i$ in the weak$^*$ topology.
Making $W \to \{L\}$, we obtain that $\hat\eta_i(\{L\})\ge \kappa = ({9}/{10}) \|\hat\eta_i\|$ for all $i\in X$.
This implies that $e_\delta(\hat\eta)=\infty$, contradicting the previous conclusion.
This contradiction proves that $\eta_k$ does have some atom for all $k$ sufficiently large.

In the remainder of this section we prove Proposition~\ref{p.couplings_decay2}.
Let $U, U_1, U_2, U_3$ be as before. The first step is:

\begin{lemma}\label{l.step21}
There exists $C_1>0$ such that, assuming $k$ is sufficiently large, given any self-coupling $\xi_k$ of $\eta_k\mid V_k$
there exists a self-coupling $\xi_k'$ of $\eta_k\mid V_k$ satisfying
\begin{enumerate}
\item[(a)] $\xi_{k,i}'\big((V_{k,i}\setminus U_2) \times (V_{k,i}\setminus U_2))=0$  for every $i\in X$;
\item[(b)] $E_\delta(\xi_{k,i}') \le E_\delta(\xi_{k,i})+C_1$ for every $i\in X$.
\end{enumerate}
\end{lemma}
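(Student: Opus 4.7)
The plan is to imitate the construction of Lemma~\ref{l.step11}, with $\eta_{k,i}$ replaced throughout by its restriction $\eta_{k,i}\mid V_{k,i}$. Concretely, for each $i\in X$, I would let $\zeta_{k,i}$ be the common coordinate projection of $\xi_{k,i}\mid (V_{k,i}\setminus U_2)\times(V_{k,i}\setminus U_2)$ and $\eta_{k,i}^3$ the projection of $\xi_{k,i}\mid U_3\times U_3$ (both well defined, since $\bar U_3\subset U\subset V_{k,i}$), and set
$$
\xi'_{k,i}
= \xi_{k,i}
- \big(\xi_{k,i}\mid (V_{k,i}\setminus U_2)\times(V_{k,i}\setminus U_2)\big)
- \frac{\|\zeta_{k,i}\|}{\|\eta_{k,i}^3\|}\big(\xi_{k,i}\mid U_3\times U_3\big)
+ \frac{1}{\|\eta_{k,i}^3\|}\big[\zeta_{k,i}\times\eta_{k,i}^3 + \eta_{k,i}^3\times\zeta_{k,i}\big].
$$
Property (a) is immediate from the first subtraction, since the added pieces are supported on $(V_{k,i}\setminus U_2)\times U_3\cup U_3\times(V_{k,i}\setminus U_2)$, and the marginal computation of Lemma~\ref{l.step11} transcribes verbatim to show that $\xi'_{k,i}$ is a self-coupling of $\eta_{k,i}\mid V_{k,i}$.

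The crux is verifying that $\xi'_{k,i}$ is a positive measure, i.e.\ that $\|\zeta_{k,i}\|<\|\eta_{k,i}^3\|$. Since $\xi_{k,i}$ has marginals $\eta_{k,i}\mid V_{k,i}$ of total mass $(10/9)\kappa$, inclusion--exclusion yields
$$
\|\eta_{k,i}^3\|\ge 2\eta_{k,i}(U_3)-\tfrac{10}{9}\kappa
\quand
\|\zeta_{k,i}\|\le \tfrac{10}{9}\kappa-\eta_{k,i}(U_2).
$$
Because $L\in U_3\subset U_2$ is an atom of $\eta_i$ of mass $\kappa$, the Portmanteau theorem gives $\liminf_k\eta_{k,i}(U_j)\ge\eta_i(U_j)\ge\kappa$ for $j=2,3$, and so
$$
\|\eta_{k,i}^3\|-\|\zeta_{k,i}\|\ge 3\eta_{k,i}(U_3)-\tfrac{20}{9}\kappa\to 3\eta_i(U_3)-\tfrac{20}{9}\kappa\ge\tfrac{7}{9}\kappa>0.
$$
In particular, $\|\zeta_{k,i}\|/\|\eta_{k,i}^3\|$ stays uniformly bounded away from $1$ in $k$ and $i$ for all large $k$.

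For (b), the two subtracted measures are positive, so it is enough to control the energy of the added piece. Its support is contained in $(V_{k,i}\setminus U_2)\times\bar U_3\cup \bar U_3\times(V_{k,i}\setminus U_2)$, which lies at distance at least $d(\bar U_3,U_2^c)>0$ from the diagonal; hence $\Psi$ is bounded by a constant $C_0$ there. Combined with $\|\zeta_{k,i}\|\le(10/9)\kappa$ and the uniform positive lower bound on $\|\eta_{k,i}^3\|$, this gives an energy increase bounded by $2C_0\|\zeta_{k,i}\|\le C_1$ for an absolute constant $C_1$, as claimed. The only point that genuinely differs from Lemma~\ref{l.step11} is the quantitative mass balance above: the ratios here are considerably tighter than the $6/10$ versus $2/10$ of the large-atom case, and the main obstacle will be checking that the narrower arithmetic inequality $3\kappa>(20/9)\kappa$ leaves enough room for positivity. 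Once this cushion is in hand, the rest of the argument is a direct transcription of the proof of Lemma~\ref{l.step11}.
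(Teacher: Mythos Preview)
Your proposal is correct and follows precisely the approach the paper indicates: the paper's proof of this lemma consists of the single sentence ``Analogous to Lemma~\ref{l.step11}, with $V_{k,i}\setminus U_2$ in the place of $U_2^c$,'' and you have faithfully carried out that substitution and verified the mass inequality $\|\zeta_{k,i}\|<\|\eta_{k,i}^3\|$ in the new setting. One cosmetic remark: your ``$\to 3\eta_i(U_3)-\tfrac{20}{9}\kappa$'' should really be a $\liminf$ statement (Portmanteau only gives $\liminf_k\eta_{k,i}(U_3)\ge\eta_i(U_3)$), but this does not affect the conclusion.
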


\begin{proof}
Analogous to Lemma~\ref{l.step11}, with $V_{k,i}\setminus U_2$ in the place of $U_2^c$.
\end{proof}

We also have the following variation of Lemma~\ref{l.step12}:

\begin{lemma}\label{l.step22}
There exists $C_2>0$ such that, assuming $k$ is sufficiently large,
$$
E_\delta(\fP_k^l\xi_k') \le (1-c\delta) E_\delta(\xi_k') + C_2.
$$
\end{lemma}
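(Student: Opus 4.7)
The plan is to mirror the proof of Lemma~\ref{l.step12} almost verbatim, with the single substantive modification being that $\xi_k'$ is now supported in $V_k\times V_k$ rather than in $\PP(\RR^d)\times\PP(\RR^d)$, so the support condition of Lemma~\ref{l.step21}(a) plays the role of Lemma~\ref{l.step11}(a).

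First, I would iterate the identity \eqref{eq.operator2bis} $l$ times and sum over $j\in X$ to obtain, exactly as in \eqref{eq.Psi1},
$$
E_\delta(\fP_k^l \xi_k') = \sum_{i\in X} \int_{[0;i]} \int \Psi(A_k^l(x)\pu,A_k^l(x)\pw) \, d\xi_{k,i}'(\pu,\pw) \, d\mu_k(x).
$$
I would then split the $(\pu,\pw)$-integration into $I_1=U_1\times U_1$ and its complement. On $I_1$, since $\Psi\le d^{-\delta}$ pointwise and $\Psi(\pu,\pw)=d(\pu,\pw)^{-\delta}$, inequality \eqref{eq.expanding_delta2} yields
$$
\int_{[0;i]} \Psi(A_k^l(x)\pu,A_k^l(x)\pw) \, d\mu_k(x) \le (1-c\delta)\, p_{k,i}\, \Psi(\pu,\pw).
$$

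For $(\pu,\pw)\in I_1^c$, the key observation is that because $\xi_{k,i}'$ is a self-coupling of $\eta_{k,i}\mid V_{k,i}$ and satisfies Lemma~\ref{l.step21}(a), the restriction $\xi_{k,i}'\mid I_1^c$ is supported in $(U_2\times(V_{k,i}\setminus U_1))\cup((V_{k,i}\setminus U_1)\times U_2)$. Since $\bar U_2\subset U_1$, this set lies at distance at least $\operatorname{dist}(\bar U_2,U_1^c)>0$ from the diagonal, uniformly in $k$ and $i$. Combined with the fact that $A_k\to A$ on the finite set $X$ gives a uniform bound on $\|A_k^l(x)\|\,\|A_k^l(x)^{-1}\|$, so by \eqref{eq.derivative_estimate} the action of $A_k^l(x)$ on projective space is uniformly bi-Lipschitz, one obtains $d(A_k^l(x)\pu,A_k^l(x)\pw)$ bounded away from zero uniformly, hence $\Psi(A_k^l(x)\pu,A_k^l(x)\pw)\le C_2'$ for some constant $C_2'$ independent of $k$, $i$, and $x$.

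Combining the two estimates and using $\sum_{i\in X} p_{k,i}\|\xi_{k,i}'\|\le 1$, one concludes
$$
E_\delta(\fP_k^l\xi_k') \le (1-c\delta)\sum_{i\in X} p_{k,i} E_\delta(\xi_{k,i}') + C_2 = (1-c\delta) E_\delta(\xi_k') + C_2.
$$
The argument is structurally identical to Lemma~\ref{l.step12}; I do not expect any serious obstacle. The only point that requires attention is verifying that the weaker support condition of Lemma~\ref{l.step21}(a) (which only vanishes off $V_{k,i}\setminus U_2$ rather than off $U_2^c$) still forces the part of $\xi_{k,i}'$ lying outside $U_1\times U_1$ to be supported off the diagonal. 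This is immediate from the nesting $U_2\subset\bar U_2\subset U_1$ together with $U_2\subset V_{k,i}$.
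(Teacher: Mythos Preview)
Your proposal is correct and follows exactly the route the paper intends: the paper's own proof of this lemma consists solely of the sentence ``Analogous to Lemma~\ref{l.step12}, with $V_{k,i}\setminus U_2$ in the place of $U_2^c$,'' and what you have written is precisely that analogy spelled out in detail. Your check that Lemma~\ref{l.step21}(a) together with the nesting $\bar U_2\subset U_1$ and the support in $V_{k,i}\times V_{k,i}$ forces the off-$I_1$ part of $\xi'_{k,i}$ to stay away from the diagonal is the only nontrivial point, and you handle it correctly.
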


\begin{proof}
Analogous to Lemma~\ref{l.step12}, with $V_{k,i}\setminus U_2$ in the place of $U_2^c$.
\end{proof}

Now we come to a main difference with respect to the more global situation treated in Section~\ref{s.proof2}.
Using \eqref{eq.operator_tris} one gets that
$$
\tilde\xi_k = \fP_k^l\xi_k' \text{  is a symmetric self-coupling of }
\cP^l_k(\eta_k \mid V_k).
$$
However, this \emph{does not} mean that $\tilde\xi_k$ is a self-coupling of $(\eta_k \mid V_k)$
as the latter measure need not be $P_k$-stationary.

To bypass this difficulty, we begin by relating $(\eta_k \mid V_k)$ to the projection of $\tilde\xi_k$
or, more precisely, to
$$
\eta_k^1 = \text{ projection of } (\tilde\xi_k \mid V_k \times V_k).
$$
The next lemma shows that $\eta_k^1 \le (\eta_k \mid V_k)$ and describes the difference
between the two vectors.

\begin{lemma}\label{l.couplings}
$(\eta_k \mid V_k) = \eta_k^1 + I_k + O_k$ with
\begin{enumerate}
\item $I_k =$ restriction of $\cP_k^{l}(\eta_k \mid V_k^c)$ to $V_k$;
\item $O_k = $ is the projection of $(\tilde\xi_k \mid V_k \times V_k^c)$ on the first coordinate;
\item $\|O_{k,j}\| \le \|I_{k,j}\| \le (2/9)\kappa$ for every $j\in X$ and $k$ sufficiently large.
\end{enumerate}
\end{lemma}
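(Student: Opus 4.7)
The plan is to prove three separate assertions: the additive decomposition $(\eta_k \mid V_k) = \eta_k^1 + I_k + O_k$, the inequality $\|O_{k,j}\| \le \|I_{k,j}\|$, and the upper bound $\|I_{k,j}\| \le (2/9)\kappa$. The decomposition will come from combining $P_k$-stationarity of $\eta_k$ with the self-coupling structure of $\tilde\xi_k$; the inequality between the masses is essentially formal given Lemma~\ref{l.inout}; and the upper bound on $\|I_{k,j}\|$ is the main obstacle and will require a concentration argument that exploits the invariance $A(i)\pv=\pv$.

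For the decomposition, $P_k$-stationarity gives
$$
\eta_{k,j} = \cP_k^l\eta_k = \cP_k^l(\eta_k \mid V_k)_j + \cP_k^l(\eta_k \mid V_k^c)_j,
$$
and restricting to $V_{k,j}$ yields $(\eta_k\mid V_k)_j = \cP_k^l(\eta_k\mid V_k)_j\mid V_{k,j} + I_{k,j}$. On the other hand, since $\tilde\xi_k=\fP_k^l\xi_k'$ and \eqref{eq.operator_tris} show that $\tilde\xi_k$ is a symmetric self-coupling of $\cP_k^l(\eta_k\mid V_k)$, its first-coordinate projection equals $\cP_k^l(\eta_k\mid V_k)$; partitioning $\tilde\xi_{k,j}$ according to whether the second coordinate lies in $V_{k,j}$ or $V_{k,j}^c$ gives $\cP_k^l(\eta_k\mid V_k)_j\mid V_{k,j} = \eta_{k,j}^1+O_{k,j}$. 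Substitution completes (1)--(2). The inequality $\|O_{k,j}\|\le\|I_{k,j}\|$ then follows from
$$
\|O_{k,j}\| = \tilde\xi_{k,j}(V_{k,j}\times V_{k,j}^c) \le (\pi_2)_*\tilde\xi_{k,j}(V_{k,j}^c) = \cP_k^l(\eta_k\mid V_k)_j(V_{k,j}^c),
$$
combined with Lemma~\ref{l.inout}, which identifies the last quantity with $\cP_k^l(\eta_k\mid V_k^c)_j(V_{k,j}) = \|I_{k,j}\|$.

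The hard part is the bound $\|I_{k,j}\|\le(2/9)\kappa$. Since $\eta_{k,j}(V_{k,j})=(10/9)\kappa$ and, by stationarity, $\|I_{k,j}\| = (10/9)\kappa - \cP_k^l(\eta_k\mid V_k)_j(V_{k,j})$, this is equivalent to the concentration estimate $\cP_k^l(\eta_k\mid V_k)_j(V_{k,j})\ge(8/9)\kappa$. The plan is to fix a small open $V'\ni\pv$ with $V'\subset U$; weak convergence $\eta_{k,i}\to\eta_i$ together with $\eta_i(\{\pv\})=\kappa$ ensures $\eta_{k,i}(V')\ge(8/9)\kappa$ for all $i\in X$ once $k$ is large, since $X$ is finite. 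The invariance $A(i)\pv=\pv$ for every $i\in X$ combined with the uniform convergence $A_k^l\to A^l$ (uniform because $A_k^l(x)$ depends only on $i_0,\dots,i_{l-1}$) allows me to shrink $V'$ so that $A_k^l(x)(V')\subset U$ for every $x\in M$ and every large $k$; equivalently $V'\subset V_{k,i}\cap A_k^l(x)^{-1}(U)$. Plugging this inclusion into the defining integral
$$
\cP_k^l(\eta_k\mid V_k)_j(U) = \sum_{i\in X} \frac{1}{p_{k,j}}\int_{[0;i]\cap[l;j]} \eta_{k,i}\bigl(V_{k,i}\cap A_k^l(x)^{-1}(U)\bigr)\,d\mu_k(x)
$$
and using $\sum_{i\in X}\mu_k([0;i]\cap[l;j]) = \mu_k([l;j]) = p_{k,j}$ gives $\cP_k^l(\eta_k\mid V_k)_j(U)\ge(8/9)\kappa$; since $U\subset V_{k,j}$, the bound on $\|I_{k,j}\|$ follows.
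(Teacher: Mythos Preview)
Your proof is correct and follows the paper's approach for the decomposition and for the inequality $\|O_{k,j}\|\le\|I_{k,j}\|$. For the bound $\|I_{k,j}\|\le(2/9)\kappa$ the paper argues dually---using $I_{k,j}\le\eta_{k,j}\mid V_{k,j}$ from the decomposition, the backward condition \eqref{eq.U1U} (namely $A_k^l(x)^{-1}(\bar U_1)\subset U$) to get $I_{k,j}(U_1)=0$, and then $\eta_{k,j}(V_{k,j}\setminus U_1)<(2/9)\kappa$---but this and your forward-invariance argument $A_k^l(x)(V')\subset U$ are equivalent consequences of $A(i)\pv=\pv$, so the two routes are the same in substance (the paper's version has the small advantage of reusing the neighborhood $U_1$ already set up in Section~\ref{s.proof1}).
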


\begin{proof}
On the one hand,
$$
\pi_{1*}\tilde\xi_k
= \cP_k^l(\eta_k \mid V_k)
= \cP_k^l(\eta_k) - \cP_k^l (\eta_k \mid V_k^c)
= \eta_k - \cP_k^l(\eta_k \mid V_k^c)
$$
because $\eta_k$ is $\cP_k^l$-stationary. Restricting to $V_k$ we get that
$$
(\pi_{1*}\tilde\xi_k) \mid V_k = (\eta_k \mid V_k) - I_k.
$$
On the other hand,
$$
(\pi_{1*}\tilde\xi_k) \mid V_k
 = \pi_{1*}\big(\tilde\xi_k \mid V_k \times V_k\big)
+ \pi_{1*}\big(\tilde\xi_k \mid V_k \times V_k^c\big)
 = \eta_k^1 + O_k.
$$
Combining two equalities one gets that $(\eta_k \mid V_k) = \eta_k^1 + I_k + O_k$.

We are left to proving the estimates in part (3) of the statement.
Let $j\in X$ be fixed. By Lemma~\ref{l.inout},
$$
\|I_{k,j}\|
= \cP^l_k(\eta_k \mid V_k^c)_j(V_{k,j})
= \cP^l_k(\eta_k \mid V_k)_j(V_{k,j}^c).
$$
Since $\tilde\xi_k$ is a self-coupling of $\cP^l_k(\eta_k \mid V_k)$,
$$
\cP^l_k(\eta_k \mid V_k)_j(V_{k,j}^c)
= \tilde\xi_k(\PP(\RR^2) \times V_{k,j}^c)
\ge \tilde\xi_k(V_{k,j} \times V_{k,j}^c)
= \|O_{k,j}\|.
$$
These two inequalities ensure that $\|O_{k,j}\| \le \|I_{k,j}\|$.

Finally, the first part of the lemma implies that $I_{k,j} \leq \eta_{k,j} \mid V_{k,j}$
for every $j\in X$. Since $\eta_{k,j}(V_{k,j})=(10/9)\kappa$ and
$\liminf_k \eta_{k,j}(U_1) \ge \eta_j(U_1) \ge \kappa$, this implies that
$$
I_{k,j}(V_{k,j}\setminus U_1)
\le \eta_{k,j}(V_{k,j}\setminus U_1)
< \frac{2}{9}\kappa
\quad\text{for every large $k$.}
$$
The condition \eqref{eq.U1U} ensures that $A_k^l(x)^{-1}(U_1)\subset U \subset V_{k,i}$
for every $x \in [0;i]$, every $i \in X$ and every large $k$. Thus,
$$
\begin{aligned}
I_{k,j}(U_1)
& = \cP^l_k(\eta_k \mid V_k^c)_j(U_1)\\
& = \sum_{i\in X} \frac{1}{p_{k,j}} \int_{[0;i]\cap [l;j]} \eta_{k,i}\big(V_{k,i}^c \cap A_k^l(x)^{-1}(U_1)\big) \, d\mu_k(x)
= 0
\end{aligned}
$$
for every large $k$. These two facts imply that
$\|I_{k,j}\| = I_{k,j}(V_{k,j} \setminus U_1) \le (2/9)\kappa$, as claimed.
\end{proof}

According to Lemma~\ref{l.couplings}, the difference $(\eta_k \mid V_k) - \eta_k^1$ is positive
and relatively small. This suggests that we try and construct the self-coupling $\xi''_k$ of
$(\eta_k \mid V_k)$ we are looking for by adding suitable correcting terms to the restriction
of $\tilde\xi_k$ to the $V_k \times V_k$.
These correcting terms should be concentrated outside a neighborhood of the diagonal,
if possible, so that their contribution to the total energy is bounded.
We choose $\xi''_k=(\xi''_{k,i})_{i\in X}$, with
\begin{equation}\label{eq.barxin}
\begin{aligned}
\xi''_{k,i}
= & \big[(\tilde \xi_{k,i} \mid V_{k,i} \times V_{k,i})-\frac {\|\zeta_{k,i}\|} {\|\eta_{k,i}^3\|} (\tilde\xi_{k,i} \mid U_3 \times U_3)\big]\\
  & \qquad + \frac {1}{\|I_{k,i}\|} \big[(O_{k,i} \mid U_2) \times I_{k,i} + I_{k,i} \times (O_{k,i} \mid U_2)\big]\\
  & \qquad + \frac {1} {\|\eta_{k,i}^3\|} \big[(\zeta_{k,i} \times \eta_{k,i}^3)+(\eta_{k,i}^3\times \zeta_{k,i})\big]
\end{aligned}
\end{equation}
where $\eta_{k,i}^3$ is the projection of $(\tilde \xi_{k,i} \mid U_3 \times U_3)$ on either coordinate and
$$
\zeta_{k,i}= \big(1-\frac {\|O_{k,i} \mid U_2\|} {\|I_{k,i}\|}\big) I_{k,i} + (O_{k,i} \mid U_2^c).
$$

\begin{lemma}\label{l.iscoupling}
$\xi''_k$ is a symmetric self-coupling of $(\eta_k \mid V_k)$.
\end{lemma}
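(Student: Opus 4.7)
The plan is to verify three things about each component $\xi''_{k,i}$ defined by \eqref{eq.barxin}: (a) symmetry, (b) positivity, and (c) that its projection on either coordinate equals $\eta_{k,i}\mid V_{k,i}$. Since the three summands in \eqref{eq.barxin} are each invariant under the coordinate swap $\iota$ (the first uses that $\tilde\xi_k$ is itself symmetric by construction via $\fP^l_k$ applied to the symmetric $\xi'_k$; the second and third are of the form $\alpha\times\beta+\beta\times\alpha$), symmetry is immediate, and by symmetry it suffices to verify the projection claim on the first coordinate.

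For the projection, I would compute term by term, using the elementary identity $\pi_{1*}(\alpha\times\beta)=\|\beta\|\alpha$. The first bracket in \eqref{eq.barxin} projects to $\eta_{k,i}^1-(\|\zeta_{k,i}\|/\|\eta_{k,i}^3\|)\eta_{k,i}^3$; the second projects to $(O_{k,i}\mid U_2)+(\|O_{k,i}\mid U_2\|/\|I_{k,i}\|)\,I_{k,i}$; the third projects to $\zeta_{k,i}+(\|\zeta_{k,i}\|/\|\eta_{k,i}^3\|)\eta_{k,i}^3$. The $\eta_{k,i}^3$ contributions cancel, and substituting the definition of $\zeta_{k,i}$ collects the two $I_{k,i}$-coefficients into $1$ and the $O_{k,i}$-pieces into the full $O_{k,i}$, leaving $\eta_{k,i}^1+I_{k,i}+O_{k,i}$. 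By Lemma~\ref{l.couplings} this equals $\eta_{k,i}\mid V_{k,i}$, as required.

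The step I expect to take a little care is positivity. The coefficient $1-\|O_{k,i}\mid U_2\|/\|I_{k,i}\|$ in $\zeta_{k,i}$ is nonnegative by Lemma~\ref{l.couplings}(3), so $\zeta_{k,i}\ge 0$, and the product terms in the second and third brackets are manifestly nonnegative measures. The only genuine subtraction occurs in the first bracket, where we must dominate $(\|\zeta_{k,i}\|/\|\eta_{k,i}^3\|)(\tilde\xi_{k,i}\mid U_3\times U_3)$ by $\tilde\xi_{k,i}\mid V_{k,i}\times V_{k,i}$. Since $U_3\subset U\subset V_{k,i}$, the restriction to $U_3\times U_3$ is already dominated by the restriction to $V_{k,i}\times V_{k,i}$, so it suffices to check $\|\zeta_{k,i}\|\le\|\eta_{k,i}^3\|$. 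From the definition, $\|\zeta_{k,i}\|=\|I_{k,i}\|+\|O_{k,i}\|-2\|O_{k,i}\mid U_2\|\le 2\|I_{k,i}\|\le(4/9)\kappa$ by Lemma~\ref{l.couplings}(3). On the other hand $\|\eta_{k,i}^3\|=\tilde\xi_{k,i}(U_3\times U_3)$, and since $\tilde\xi_{k,i}$ is a self-coupling of $\cP^l_k(\eta_k\mid V_k)_i$ (which assigns mass close to $\kappa$ to $U_3$ for $k$ large, because $\pv\in U_3$ is an atom of $\eta_i$ of weight $\kappa$ and the weak$^*$ limit of $\cP^l_k(\eta_k\mid V_k)_i$ charges $\{\pv\}$ by at least $\kappa$), a two-set inclusion-exclusion on the marginals forces $\|\eta_{k,i}^3\|$ to exceed $(4/9)\kappa$ for all sufficiently large $k$. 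This gives the required domination and completes positivity.

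Once the three properties are in place, the conclusion $\xi''_k$ is a symmetric self-coupling of $\eta_k\mid V_k$ follows, and the lemma is proved.
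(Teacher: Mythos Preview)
Your proof follows the same three-part structure as the paper's (symmetry, positivity, projection computation), and the projection calculation and the bound $\|\zeta_{k,i}\|\le\|I_{k,i}\|+\|O_{k,i}\|\le(4/9)\kappa$ are identical to the paper's. The only noteworthy difference is in how you obtain the matching lower bound $\|\eta_{k,i}^3\|>(4/9)\kappa$. The paper introduces an auxiliary neighborhood $U_4\subset U_3$ with $A^l(x)\bar U_4\subset U_3$, shows $\xi'_{k,i}(U_4\times U_4)\ge(5/9)\kappa$ by inclusion--exclusion on $\eta_{k,i}\mid V_{k,i}$, and then pushes this through $\fP_k^l$ using $A_k^l(x)^{-1}(U_3)\supset U_4$. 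You instead argue directly that the marginal $\cP_k^l(\eta_k\mid V_k)_i$ of $\tilde\xi_{k,i}$ has mass close to $\kappa$ on $U_3$ and then apply inclusion--exclusion to $\tilde\xi_{k,i}$ itself. This is a legitimate shortcut, but your justification (``the weak$^*$ limit of $\cP_k^l(\eta_k\mid V_k)_i$ charges $\{\pv\}$ by at least $\kappa$'') is a bit hand-wavy as stated; the clean way to make it precise is to note that $\cP_k^l(\eta_k\mid V_k)_i(U_3)=\eta_{k,i}(U_3)-\cP_k^l(\eta_k\mid V_k^c)_i(U_3)$, that the second term vanishes for large $k$ because $A_k^l(x)^{-1}(U_3)\subset A_k^l(x)^{-1}(\bar U_1)\subset U\subset V_{k,i}$ by \eqref{eq.U1U}, and that $\liminf_k\eta_{k,i}(U_3)\ge\eta_i(U_3)\ge\kappa$ since $U_3$ is open. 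With that filled in, your route is slightly more direct than the paper's and avoids introducing $U_4$.
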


\begin{proof}
Let $i\in X$ be fixed.
It is clear that the three terms on the right-hand side of \eqref{eq.barxin} are symmetric.
Moreover,
$$
\|O_{k,i} \mid U_2\| \le \|O_{k,i}\| \le \|I_{k,i}\|.
$$
This ensures that $\zeta_{k,i}$ is a positive measure. Clearly, the last  two terms in \eqref{eq.barxin} are positive.
To see that  the first term is also positive, it suffices to check that
$\|\zeta_{k,i}\| < \|\eta_{k,i}^3\|$. That can be done as follows.

Fix an open neighborhood $U_4 \subset U_3$ such that $A^l(x)\bar U_4 \subset U_3$ for every $x\in M$.
Assuming $k$ is large enough,
$$
\eta_{k,i}(U_4)
> \frac{8}{9}\eta_{i}(U_4)
\ge \frac{8}{9}\kappa
\quad\text{for every $i\in X$.}
$$
Since $\eta_{k,i}(V_{k,i})=(10/9)\kappa$, this implies that $\xi'_{k,i}(U_4 \times U_4)\ge (5/9)\kappa$.
Increasing $k$ again, if necessary, $A_k^l(x)^{-1}(U_3) \supset U_4$ for every $x\in M$. Hence,
$$
\|\eta^3_{k,j}\|
= \tilde\xi_{k,j}(U_3 \times U_3)
\ge \sum_{i\in X} \frac{1}{p_{k,j}} \int_{[0;i]\cap[l;j]} \xi'_{k,i}(U_4 \times U_4) \, d\mu_k(x)
\ge \frac{5}{9}\kappa.
$$
So, the claim that $\|\zeta_{k,i}\| \leq \|\eta^3_{k,i}\|$ is an immediate consequence of
$$
\|\zeta_{k,i}\|
\leq \|I_{k,i}\| + \|O_{k,i}\|
\leq \frac{4}{9}\kappa.
$$

Next, observe that the projection of $\xi_{k,i}''$ on either coordinate is equal to
$$
\big[\eta_{k,i}^1-\frac {\|\zeta_{k,i}\|} {\|\eta_{k,i}^3\|} \eta_{k,i}^3\big]
+ \big[(O_{k,i} \mid U_2)+\frac {\|O_{k,i} \mid U_2\|} {\|I_{k,i}\|} I_{k,i}\big]
+ \big[\zeta_{k,i} + \frac {\|\zeta_{k,i}\|} {\|\eta_{k,i}^3\|} \eta_{k,i}^3\big]
$$
and that adds up to $\eta_{k,i}^1 + I_{k,i} + O_{k,i} = \eta_{k,i} \mid V_{k,i}$.
So, $\xi''_{k,i}$ is a self-coupling of $\eta_{k,i} \mid V_{k,i}$ as we wanted to prove.
\end{proof}

\begin{lemma}\label{l.step23}
There exists $C_3>0$ such that
$$
E_\delta(\xi''_{k,i}) \le E_\delta(\tilde\xi_{k,i}) + C_3
\quad\text{for every $i\in X$ and $k$ sufficiently large.}
$$
\end{lemma}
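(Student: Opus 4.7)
The plan is to decompose $\xi''_{k,i}=T_1+T_2+T_3$, one summand per bracket in \eqref{eq.barxin}, and estimate each piece separately. For the first bracket,
\[
T_1=(\tilde\xi_{k,i}\mid V_{k,i}\times V_{k,i})-\frac{\|\zeta_{k,i}\|}{\|\eta^3_{k,i}\|}\,(\tilde\xi_{k,i}\mid U_3\times U_3),
\]
the subtracted piece is a nonnegative measure, so $T_1\le\tilde\xi_{k,i}\mid V_{k,i}\times V_{k,i}\le\tilde\xi_{k,i}$ as signed measures; since $\Psi\ge 0$, this already gives $E_\delta(T_1)\le E_\delta(\tilde\xi_{k,i})$, and it only remains to show $E_\delta(T_2)+E_\delta(T_3)\le C_3$ uniformly in $k$ and $i$.

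The bounds on $T_2$ and $T_3$ will follow from a careful support analysis. From the proof of Lemma~\ref{l.couplings} one has $I_{k,i}(U_1)=0$, hence $I_{k,i}$ is supported in $V_{k,i}\setminus U_1$. By construction $O_{k,i}\mid U_2$ is supported in $U_2\subset U_1$, $\eta^3_{k,i}$ is supported in $U_3\subset U_2$, and
\[
\zeta_{k,i}=\Bigl(1-\frac{\|O_{k,i}\mid U_2\|}{\|I_{k,i}\|}\Bigr)I_{k,i}+(O_{k,i}\mid U_2^c)
\]
is supported in $V_{k,i}\setminus U_2$ (each summand is).

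Consequently $T_2$ lives on $U_2\times(V_{k,i}\setminus U_1)\cup(V_{k,i}\setminus U_1)\times U_2$, which is disjoint from $U_1\times U_1$; hence $\Psi\equiv 1$ there, and using Lemma~\ref{l.couplings}(3),
\[
E_\delta(T_2)=\|T_2\|=2\|O_{k,i}\mid U_2\|\le 2\|I_{k,i}\|\le\tfrac{4}{9}\kappa.
\]
Similarly $T_3$ lives on $(V_{k,i}\setminus U_2)\times U_3\cup U_3\times(V_{k,i}\setminus U_2)$, on which any two points are separated by at least $r_0=d(\bar U_3,U_2^c)>0$, so $\Psi\le\max(1,r_0^{-\delta})$ there, and
\[
E_\delta(T_3)\le\max(1,r_0^{-\delta})\cdot 2\|\zeta_{k,i}\|\le\max(1,r_0^{-\delta})\cdot\tfrac{8}{9}\kappa,
\]
using $\|\zeta_{k,i}\|\le\|I_{k,i}\|+\|O_{k,i}\|\le\tfrac{4}{9}\kappa$ once more. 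Summing the three contributions yields the claim with a constant $C_3$ depending only on $U_1,U_2,U_3,\delta$ and $\kappa$.

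The only delicate step is the support bookkeeping, especially the identity $I_{k,i}(U_1)=0$, which rests on the inclusion $A_k^l(x)^{-1}(\bar U_1)\subset U\subset V_{k,i}$ from \eqref{eq.U1U} and was already isolated in the proof of Lemma~\ref{l.couplings}. Once those supports are pinned down, the rest is a straightforward additive accounting of total masses against uniform pointwise bounds on $\Psi$.
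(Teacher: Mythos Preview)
Your proof is correct and follows essentially the same approach as the paper: bound the first bracket by $E_\delta(\tilde\xi_{k,i})$ via monotonicity, and bound the two correcting terms using that their supports are uniformly separated from the diagonal (via $I_{k,i}(U_1)=0$ and the nesting $U_3\subset U_2\subset U_1$) together with the mass estimates from Lemma~\ref{l.couplings}. Your version is in fact more explicit than the paper's, giving concrete numerical bounds for the masses and distinguishing the cases $\Psi\equiv 1$ (for $T_2$) and $\Psi\le r_0^{-\delta}$ (for $T_3$), but the argument is the same.
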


\begin{proof}
Notice that the supports of the two correcting terms
$$
\frac {1}{\|I_{k,i}\|} \big[(O_{k,i} \mid U_2) \times I_{k,i} + I_{k,i} \times (O_{k,i} \mid U_2)\big]
$$
and
$$
\frac {1} {\|\eta_{k,i}^3\|} \big[(\zeta_{k,i} \times \eta_{k,i}^3)+(\eta_{k,i}^3\times \zeta_{k,i})\big]
$$
are uniformly away from the diagonal of $\PP(\RR^d)^2$.
Indeed, $O_{k,i}\mid U_2$ is concentrated on $U_2$ while $I_{k,i}$ is concentrated on $U_1^c$, as we have seen.
Similarly, $\zeta_{k,i}$ is concentrated on $U_2^c$ while $\eta_{k,i}^3$ is concentrated on $U_3$.
Moreover, the masses of these two correcting terms are uniformly bounded (by $1$, say).
Thus, the $\delta$-energy of their sum is bounded by some constant $C_3>0$, for every large $k$.
Finally, it is clear that the $\delta$-energy of the first term in \eqref{eq.barxin} is bounded by
$E_\delta(\tilde\xi_{k,i} \mid V_{k,i} \times V_{k,i})$.
Therefore,
$$
E_\delta(\xi''_{k,i})
\leq E_\delta(\tilde\xi_{k,i} \mid V_{k,i} \times V_{k,i}) + C_3
\leq E_\delta(\tilde\xi_{k,i})+C_3,
$$
as claimed.
\end{proof}

By Lemmas~\ref{l.step21}, \ref{l.step22} and~\ref{l.step23},
$$
E_\delta(\xi''_k)
\le (1-c\delta) E_\delta(\xi_k) + (1-c\delta) C_1 + C_2 + C_3.
$$
for every large $k$. This proves Proposition~\ref{p.couplings_decay2} for $C=(1-c\delta)C_1 + C_2 + C_3$.

The proof of Theorem~\ref{t.key} is complete.

\renewcommand{\bibname}{References}
\addcontentsline{toc}{chapter}{\bibname}       

\end{document}